\author{Dimitris Lygkonis, Vassilis Nestoridis}
\title{Localized versions of function spaces and generic results}
\date{}
\newcommand{\dist}{\text{dist}}
\newcommand{\tct}{\overline{\phantom{.}{\overline{\OO}}^{\mathrm{c}}\phantom{.}}}
\newcommand{\bb}{b_1\subset\overline b_1\subset b_2\cap\OO, \ \ b_2\cap\OO^c\neq\emptyset}
\newcommand{\zn}{(z_n)_{n \in \N} }
\newcommand{\xn}{(x_n)_{n \in \N} }
\newcommand{\wn}{(w_n)_{n \in \N} }
\newcommand{\tn}{(\tau_n)_{n \in \N} }
\newcommand{\dn}{(\de_n)_{n \in \N} }
\newcommand{\ch}{\overline}
\newcommand{\dif}{\smallsetminus}
\newcommand{\tomi}{\displaystyle\bigcap}
\newcommand{\enosi}{\displaystyle\bigcup}
\newcommand{\ext}{\widehat{\Co}}
\newcommand{\ct}{\mathrm{c}}
\newcommand{\bd}{\partial}
\newcommand{\OO} {{\Omega}}
\newcommand{\Co}{\mathbb{C}}
\newcommand{\N}{\mathbb{N}}
\newcommand{\R}{\mathbb{R}}
\newcommand{\ra}{\rightarrow}
\newcommand{\lra}{\longrightarrow}
\newcommand{\norm}[1]{\left\lVert#1\right\rVert}
\newcommand{\e}{\epsilon}
\newcommand{\de}{\delta}
\theoremstyle{definition}
\newtheorem{theorem}{Theorem}[section]
\newtheorem{lemma}[theorem]{Lemma}
\newtheorem{proposition}[theorem]{Proposition}
\newtheorem{corollary}[theorem]{Corollary}
\newtheorem{definition}[theorem]{Definition}
\newtheorem{example}[theorem]{Example}
\newtheorem{remark}[theorem]{Remark}
\numberwithin{equation}{section}
\begin{document}
\maketitle
\begin{flushleft}

\begin{abstract}
\justify
We consider generalizations of classical function spaces by requiring that a holomorphic in $\Omega$ function  satisfies some property when we approach from $\Omega$, not the whole boundary $\partial{\Omega}$, but only a part of it. These spaces endowed with their natural topology are Fr\'{e}chet spaces. We prove some generic non-extendability results in such spaces and generic nowhere differentiability on the corresponding part of $\partial{\Omega}$. \\
\end{abstract}

{\footnotesize AMS classification number: 30H05, 30H20, 30H50 \\
Key words and phrases: Bounded holomorphic functions, Bergman spaces, Mergelyan's theorem, Baire category theorem, non-extendable functions, nowhere differentiable functions}

\section{Introduction}
\justify
Let $\OO$ be a domain in $\mathbb{C}$ or $\mathbb{C}^d$. Let $X(\OO)$ be a set of holomorphic functions in $\Omega$ which is a Fr\'{e}chet space. We also assume that the convergence $f_{n} \longrightarrow f$ in the topology of $X(\OO)$ implies the pointwise convergence $f_{n}(z) \longrightarrow f(z)$ for all $z \in \OO$. In order that there exists a non-extendable function, $f$ in $X(\OO)$, it suffices that the following holds: For every pair of open balls $(b_1,b_2)$, satisfying $ b_1 \subset \overline{b_1} \subset b_2 \cap \OO$ and $b_2 \cap  {\OO}^{\mathrm{c}} \neq \emptyset$, there exists a function $f=f_{b_1,b_2}$  in $X(\OO)$, such that $f_{|b_{1}}$ does not admit any bounded holomorphic extensions on $b_2$. Furthermore, if the above hold, the set $\{f \in X(\OO) : f \: \text{is non-extendable}\}$ is dense and $G_\delta$ in $X(\OO)$ \cite{DoH}.
\par Examples of functions spaces $X(\OO)$ satisfying the above assumptions include most of the classical functions spaces, as $H(\OO), \: 
A(\OO),\: A^{p}(\OO),\: H^{p}(\OO)$, Bergman spaces etc. Most of these spaces are defined as the set of holomorphic in $\OO$ functions, satisfying some additional property when we approach the whole boundary $\partial{\OO}$ from $\OO$. We can generalize these spaces by requiring a property to hold when we approach only a part $J$ of $\partial{\OO}$ and consider combinations of such spaces. Then, these spaces endowed with their natural topology are also Fr\'{e}chet spaces, which satisfy the above assumptions. We can investigate non-extendability of functions belonging to these spaces. The natural assumption is that the part $J$, is a relatively open subset of the boundary $\partial{\OO}$.
\par A first example is the space $X(\OO,V)= H(\OO)\cap H^{\infty}(V)$ containing all holomorphic in $\Omega$ functions $f$ bounded on $V$, where $V$ is an open subset of $\OO$. Then, the natural topology is induced by the seminorms $  \norm{f|_{K_m}}_{\infty}$ and $\norm{f|_{V}}_{\infty}$, where, $\{K_m\}_{m=1}^{\infty}$ is an exhaustive sequence of compact subsets of $\OO$ \cite{Ru}. We prove that if $\overline{V} \cap \bd{\OO}$ is contained in $  \overline{\phantom{.}{\overline{\OO}}^{\mathrm{c}}\phantom{.}}$, then there exist non-extendable functions in $X(\OO,V)$ and their set is dense and $G_\delta$. Here, $\OO \subset \mathbb{C}$, but we also discuss some extensions for $\OO \subset \mathbb{C}^d$. 
\par Next, we generalize the Bergman spaces considering holomorphic in $\OO$ functions f, integrable on $V$ and we prove similar results. Variations of the previous spaces are obtained by requiring $f^{(l)}$ for $l$ in some set $F \subset \{0,1,2,...\}$ satisfy the previous requirements. We can also consider several subsets $V_i, i \in I$, where $I$ is a finite or infinite denumerable set, and consider the space of holomorphic in $\OO$ functions satisfying different properties in each $V_i$. 
\par If $\OO$ is a Jordan domain and $J \subset \bd{\OO}$ is relatively open, we consider the space $A^{0}(\OO,J)$ to contain all holomorphic in $\OO$ functions extending continuously on $\OO \cup J$, endowed with its natural topology, see also \cite{GeorgMastrNest}. We show that the generic function in $A^{0}(\OO,J)$ is nowhere differentiable on $J$. Here, the differentiability is meant with respect to the parametrization induced by any Riemann map from the open unit disc onto $\OO$ \cite{MastrPan}, or with respect to the position \cite{KavvMakr}. We notice that in this case, polynomials are dense in $A^{0}(\OO,J)$. Furthermore, we generalize the previous results to domains $\OO$ bounded by a finite number of disjoint Jordan curves. We also consider the spaces $A^{p}(\OO,J)$ containing all functions $f \in A^{0}(\OO,J)$, such that all the derivatives $ f^{(l)}, \: 0 \leq l\leq p$ belong to $A^{0}(\OO,J)$, endowed with its natural topology. We show that if $\OO$ is convex, then for the generic function $f \in A^{p}(\OO,J)$, the derivative $f^{(p)}$ is nowhere differentiable on $J$.

\section{Preliminaries}

\begin{definition} \label{2.1.}
Let  $\OO \subset \mathbb{C}^{d}$ be open and connected and $f: \OO \rightarrow \mathbb{C}$  be a holomorphic function. Then, $f$ is called extendable if there exist an open and connected set $U \subset \mathbb{C}^d$ with $U \cap \OO \neq \emptyset$ and $U \cap \OO^c \neq \emptyset$, a holomorphic function $F : U \rightarrow \mathbb{C}$ and a component $V$ of $U \cap \OO$ such that $F_{|V} = f_{|V}$. Otherwise, $f$ is called non-extendable \cite{Range}.

\end{definition}

\begin{definition} \label{2.2.}
Let $\OO \subset\Co^d$ be open and connected and $f:\OO\rightarrow \Co$ be a holomorphic function. Then, $f$ is called extendable in the sense of Riemann domains, if there exist two open Euclidean balls $b_1,b_2\subset\Co^d$, with
\[
b_1\subset\overline b_1\subset b_2\cap\OO, \ \ b_2\cap\OO\neq\emptyset, \ \ b_2\cap\OO^c\neq\emptyset,
\]
and a bounded holomorphic function $F:b_2\rightarrow\Co$ such that $F|_{b_1}=f|_{b_1}$. Otherwise the function $f$ is called non-extendable in the sense of Riemann domains \cite{DoH}.

\end{definition}

\begin{proposition} \label{2.3.}
Definitions 2.1. and 2.2. are equivalent.
\end{proposition}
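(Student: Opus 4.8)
The plan is to prove the two implications separately: the implication ``extendable in the sense of Riemann domains $\Rightarrow$ extendable'' is the easy one, and the converse is where the work lies.

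For the direction $2.2\Rightarrow 2.1$: suppose we are given balls with $b_1\subset\overline b_1\subset b_2\cap\OO$, $b_2\cap\OO^c\neq\emptyset$, and a bounded holomorphic $F:b_2\to\Co$ with $F|_{b_1}=f|_{b_1}$. I would simply take $U=b_2$, which is open, connected and meets both $\OO$ (it contains $b_1$) and $\OO^c$. Let $V$ be the connected component of $U\cap\OO$ that contains the connected set $b_1$. On $V$ both $F$ and $f$ are holomorphic, and $F-f$ vanishes on the nonempty open subset $b_1\subseteq V$; by the identity theorem for holomorphic functions of several complex variables, $F\equiv f$ on $V$. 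Hence the triple $(U,F,V)$ witnesses Definition 2.1.

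For the direction $2.1\Rightarrow 2.2$: start from an open connected $U$, a holomorphic $F:U\to\Co$, and a component $V$ of $U\cap\OO$ with $F|_V=f|_V$. The key observation is that $V$ is a nonempty open subset of the connected set $U$ which is \emph{proper} (since $U\cap\OO^c\neq\emptyset$ forces $U\cap\OO\subsetneq U$, and $V\subseteq U\cap\OO$), while a connected component of $U\cap\OO$ is relatively closed in $U\cap\OO$. Consequently $V$ cannot be relatively closed in $U$ — otherwise it would be clopen in the connected set $U$, hence equal to $U$ — so there is a point $p\in U$ with $p\in\overline V$ but $p\notin V$. Such a $p$ cannot lie in $\OO$: if it did, then $p\in U\cap\OO$ would lie in the closure of $V$ relative to $U\cap\OO$, hence in $V$, a contradiction. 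Therefore $p\in\overline V\cap\OO^c\subseteq\overline\OO\cap\OO^c=\bd\OO$. Now I would choose $b_2$ to be a small open Euclidean ball centered at $p$ with $\overline{b_2}\subset U$; then $F$ is holomorphic on $b_2$ and, being continuous on the compact set $\overline{b_2}$, bounded there. Since $p\in\overline V$, there is a point $q\in b_2\cap V$, and since $b_2\cap V$ is open I can pick a ball $b_1$ centered at $q$ small enough that $\overline{b_1}\subset b_2\cap V\subseteq b_2\cap\OO$. Then $b_1\subseteq V$ gives $F|_{b_1}=f|_{b_1}$, while $p\in b_2\cap\OO^c$ and $\emptyset\neq b_1\subseteq b_2\cap\OO$ supply the remaining requirements of Definition 2.2.

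The only genuinely delicate point is the topological step in the converse: producing a boundary point $p\in\bd\OO$ lying in $\overline V$, via the fact that components of the open set $U\cap\OO$ are relatively closed in it while $V$ is a proper nonempty open subset of the connected set $U$. Everything else — the identity theorem, the boundedness of $F$ on a ball with compact closure in $U$, and the existence of a small inner ball $b_1\subseteq V$ accumulating at $p$ — is routine, and the whole argument is insensitive to whether $\OO\subseteq\Co$ or $\OO\subseteq\Co^d$.
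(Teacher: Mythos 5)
Your argument is correct and complete: the easy direction via $U=b_2$ and the identity theorem, and the converse via the observation that the component $V$ of $U\cap\OO$ is open but not relatively closed in the connected set $U$, which produces a point $p\in U\cap\overline{V}\cap\OO^{\mathrm{c}}$ around which the balls $b_2$ and $b_1$ can be placed. The paper itself does not spell out a proof (it only cites \cite{DoH}), and your reasoning is essentially the standard argument given there, so there is nothing to add.
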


\begin{proof}
The proof of Proposition 2.3. is contained in \cite{DoH}.
\end{proof}

\noindent Let $\OO\subset \mathbb{C}^d$ be open and connected and let $X=X(\OO)$ be a subset of $H(\OO)$.

\begin{definition} \label{2.4.}
The open connected set $\OO\subset\Co^d$ is called a $X$-domain of holomorphy if there exists $f\in X$ which is non-extendable \cite{DoH}.
\end{definition}

\begin{definition} \label{2.5.}
The open connected set $\OO\subset\Co^d$ is called weak $X$-domain of holomorphy if for every pair of open Euclidean balls $b_1,b_2$ with $b_2\cap\OO\neq\emptyset$, $b_2\cap\OO^c\neq\emptyset$, $b_1\subset\overline{b}_1\subset b_2\cap\OO$ there exists a function $f_{b_1,b_2}\in X$ such that the restriction of $f_{b_1,b_2}$ on $b_1$ does not have any bounded holomorphic extension on $b_2$ \cite{DoH}.
\end{definition}

\begin{theorem} \label{2.6.}
We suppose that $X=X(\OO)\subset H(\OO)$ is a topological vector space endowed with the usual operations $+,\cdot$ and that its topology is induced by a complete metric. We also suppose that the convergence $f_n\lra f$ in $X$ implies the pointwise convergence $f_n(z)\lra f(z)$ for all $z\in\OO$. Then, definitions 2.4. and 2.5. are equivalent. If the above assumptions hold and $\OO$ satisfies definitions 2.4. and 2.5., then the set $\{f\in X: f$ is non extendable$\}$ is a dense and $G_\delta$ subset of $X$.
\end{theorem}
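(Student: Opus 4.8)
The plan is to run the standard Baire-category machinery for generic non-extendability, adapted to the abstract Fréchet-type setting of Theorem \ref{2.6.}. First I would dispose of the equivalence of Definitions 2.4. and 2.5. The implication ``2.4. $\Rightarrow$ 2.5.'' is essentially trivial by Proposition 2.3.: a single non-extendable $f\in X$ serves as $f_{b_1,b_2}$ for every admissible pair $(b_1,b_2)$, since if $f|_{b_1}$ had a bounded holomorphic extension to some such $b_2$ then $f$ would be extendable in the sense of Riemann domains, hence extendable. The converse implication ``2.5. $\Rightarrow$ 2.4.'' will follow as a by-product of the genericity statement, so I would prove that directly.

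The core of the argument is to exhibit the non-extendable functions as a dense $G_\delta$. Fix a countable family $(b_1^{(j)},b_2^{(j)})_{j\in\N}$ of pairs of open Euclidean balls with rational centers and radii satisfying $b_1^{(j)}\subset\overline{b_1^{(j)}}\subset b_2^{(j)}\cap\OO$ and $b_2^{(j)}\cap\OO^c\neq\emptyset$, chosen so that every pair admissible in Definition 2.5. ``contains'' one from the list in the appropriate sense; a function that is non-extendable in the sense of Riemann domains relative to every pair in this countable family is non-extendable relative to all pairs, hence non-extendable by Proposition 2.3. For each $j$ and each $m\in\N$, I would set
\[
E(j,m)=\{f\in X : \text{there is a holomorphic } F:b_2^{(j)}\to\Co \text{ with } \|F\|_{\infty}\leq m \text{ and } F|_{b_1^{(j)}}=f|_{b_1^{(j)}}\},
\]
so that the extendable-in-the-Riemann-sense functions relative to pair $j$ form $\bigcup_{m}E(j,m)$, and the non-extendable functions are $\bigcap_{j}\bigcap_{m}(X\setminus E(j,m))$. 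The two things to check are: (i) each $E(j,m)$ is closed in $X$, and (ii) each $X\setminus E(j,m)$ is dense. For (i), take $f_n\in E(j,m)$ with $f_n\to f$ in $X$; the associated extensions $F_n$ are uniformly bounded by $m$ on $b_2^{(j)}$, so by Montel's theorem a subsequence converges locally uniformly on $b_2^{(j)}$ to some holomorphic $F$ with $\|F\|_\infty\leq m$, and since $f_n\to f$ pointwise on $\OO$ (using the hypothesis that $X$-convergence implies pointwise convergence) we get $F|_{b_1^{(j)}}=f|_{b_1^{(j)}}$, so $f\in E(j,m)$. For (ii), density uses the hypothesis that $\OO$ satisfies Definition 2.5.: given any $g\in X$ and a neighborhood of $g$, pick the witness $f_{b_1^{(j)},b_2^{(j)}}\in X$ whose restriction to $b_1^{(j)}$ has no bounded holomorphic extension to $b_2^{(j)}$; then for all sufficiently small $\lambda>0$ the function $g+\lambda f_{b_1^{(j)},b_2^{(j)}}$ lies in the prescribed neighborhood (using that $X$ is a topological vector space with continuous operations and a translation-invariant metric), and it cannot lie in $E(j,m)$ for any fixed $m$ --- because if $g+\lambda f_{b_1^{(j)},b_2^{(j)}}\in E(j,m)$ and $g$ itself happened to have a bounded holomorphic extension to $b_2^{(j)}$, subtracting would extend $\lambda f_{b_1^{(j)},b_2^{(j)}}$, a contradiction; and if $g$ does not extend, $g+\lambda f_{b_1^{(j)},b_2^{(j)}}$ might accidentally extend only for at most one value of $\lambda$, so we simply choose $\lambda$ small and outside that single bad value. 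Hence each $X\setminus E(j,m)$ is dense and open, and by the Baire category theorem --- applicable since $X$ is completely metrizable --- the intersection $\bigcap_{j,m}(X\setminus E(j,m))$ is a dense $G_\delta$, which by the reduction above equals $\{f\in X : f\text{ is non-extendable}\}$; in particular it is nonempty, giving ``2.5. $\Rightarrow$ 2.4.''

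The main obstacle, and the step requiring the most care, is the density assertion (ii): specifically, the bookkeeping that shows $g+\lambda f_{b_1^{(j)},b_2^{(j)}}\notin E(j,m)$ for suitably chosen small $\lambda$. One must argue that the set of $\lambda\in\Co$ (or $\R$) for which $g+\lambda f_{b_1^{(j)},b_2^{(j)}}$ admits a bounded holomorphic extension to $b_2^{(j)}$ is an affine subspace of dimension at most a point once $f_{b_1^{(j)},b_2^{(j)}}$ itself fails to extend, so its complement is dense near $0$; combined with continuity of scalar multiplication this yields points of $X\setminus E(j,m)$ in every neighborhood of $g$. A secondary technical point is setting up the countable family of ball-pairs so that non-extendability against the countable list is genuinely equivalent to non-extendability against all admissible pairs --- this is a routine approximation argument (shrink $b_2$ slightly and enlarge $b_1$ slightly with rational data) but it must be stated cleanly. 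Everything else --- closedness via Montel, applicability of Baire --- is standard once the hypotheses of Theorem \ref{2.6.} are invoked.
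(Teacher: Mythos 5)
Your proposal is correct and follows essentially the same route as the proof the paper relies on: the paper itself only cites \cite{DoH}, and the argument there is exactly this Baire-category scheme — a countable family of rational ball pairs, closedness of the sets $E(j,m)$ via Montel's theorem together with the pointwise-convergence hypothesis, and density of their complements from the witness functions of Definition \ref{2.5.} (your ``at most one bad $\lambda$'' perturbation is the same observation as the fact that the functions extendable over a fixed pair form a proper linear subspace, hence each $E(j,m)$ has empty interior). The only cosmetic slip is your appeal to a translation-invariant metric, which is neither among the hypotheses nor needed, since continuity of the vector space operations already gives $g+\lambda f_{b_1^{(j)},b_2^{(j)}}\to g$ as $\lambda\to 0$.
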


\begin{proof}
The proof of Theorem 2.6. is contained in \cite{DoH}.
\end{proof}

It follows that in order to prove that $\OO$ is a $X(\OO)$-domain of holomorphy, it suffices to prove the following: For every pair of euclidean balls $(b_1,b_2)$, such that $b_2\cap\OO\neq\emptyset$, $b_2\cap\OO^c\neq\emptyset$, $b_1\subset\overline{b}_1\subset b_2\cap\OO$ there exists a function $f_{b_1,b_2}\in X$ such that the restriction of $f_{b_1,b_2}$ on $b_1$ does not have any bounded holomorphic extension on $b_2$ \cite{DoH}.

\begin{lemma} \label{2.7.}
Let $\gamma$ be a Jordan curve, $J \subset \gamma$ a rectifiable open arc and $J' \subset J$ a compact arc. Then, $J'$ can be extended to a rectifiable Jordan curve $\gamma'$, such that the interior of $\gamma'$ is a subset of the interior of $\gamma$. 
\end{lemma}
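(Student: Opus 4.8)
The plan is to realise $\gamma'$ as the union of a compact subarc of $J$ slightly larger than $J'$ together with a rectifiable crosscut of the Jordan domain $\OO:=\mathrm{int}(\gamma)$ joining the endpoints of that larger subarc. Since $J$ is relatively open in $\gamma$ and $J'$ is compact, I would first enlarge $J'$ to a compact subarc $K\subset J$ having $J'$ in its relative interior, writing $p,q$ for the endpoints of $J'$. I will then produce points $a,b$ in the two components of $J\setminus J'$ adjacent to $p$ and to $q$, taken close enough to $p$ and $q$ that the subarc $K'$ of $J$ joining them through $J'$ lies inside $K$, and a simple arc $\sigma$ from $a$ to $b$ with $\sigma\setminus\{a,b\}\subset\OO$, $\sigma\cap\gamma=\{a,b\}$, and $\sigma$ rectifiable. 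Granting this, $\gamma':=K'\cup\sigma$ is a Jordan curve (its two arcs meet only at $a,b$), it contains $J'$ as a subarc, and it is rectifiable because $K'$ is a subarc of $J$ and $\sigma$ was built rectifiable. Moreover $\sigma$ is a crosscut of $\OO$, hence splits it into two Jordan subdomains whose boundaries are $\sigma$ together with one of the two arcs of $\gamma$ cut off by $a,b$; the one corresponding to $K'$ has boundary exactly $\gamma'$, so $\mathrm{int}(\gamma')\subset\OO=\mathrm{int}(\gamma)$.

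Everything reduces to one statement, which is also the main obstacle: \emph{every non-degenerate compact subarc $K_0\subset J$ contains, densely, points that are accessible from $\OO$ by a straight line segment meeting $\gamma$ only at that point.} To prove it I would parametrise $K_0$ by arc length, $g:[0,L]\to\Co$ --- a homeomorphism onto $K_0$, differentiable a.e.\ with $|g'|=1$ --- and fix $s_0\in(0,L)$ at which $g$ is differentiable; such $s_0$ form a full-measure, hence dense, set, so the points $p_0=g(s_0)$ are dense in $K_0$. Writing $\tau=g'(s_0)$, differentiability confines $g([s_0-\eta,s_0+\eta])$ to an $\e(\eta)\eta$-neighbourhood $T_\eta$ of the tangent segment $\{p_0+t\tau:|t|\le\eta\}$ with $\e(\eta)\to0$, while $g^{-1}$ being continuous prevents $\gamma$ from returning near $p_0$, so that $d_0:=\dist\!\big(p_0,\ \gamma\setminus g((s_0-\eta,s_0+\eta))\big)>0$; a short computation with the differentiability estimate lets one take $\eta$ small and then $\rho$ comparable to $\eta$ with $\rho<\min(\eta,d_0)$, so that inside $D(p_0,\rho)$ the curve $\gamma$ is confined to the thin band $T_\eta$ across the disc, and the two complementary ``caps'' of $D(p_0,\rho)\setminus T_\eta$ are disjoint from $\gamma$.

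The crux is to show that at least one of these caps lies in $\OO$: then the inward normal segment issuing from $p_0$ into that cap is the desired segment, since the same differentiability estimate shows it cannot meet $g([s_0-\eta,s_0+\eta])$ and the bound $\rho<d_0$ keeps it off the rest of $\gamma$. If, on the contrary, both caps lay in the exterior of $\gamma$, then $\OO\cap D(p_0,\rho)$ would be forced into the open band; I would then take the connected component of $\gamma\cap D(p_0,\rho)$ through $p_0$, check that its closure is a crosscut of the disc $D(p_0,\rho)$ (its ends land on $\partial D(p_0,\rho)$ because $g(s_0\pm\eta)$ lies outside the disc, and its interior stays inside), note that this crosscut lies in the thin band and therefore separates the top cap from the bottom one, and deduce that any point of $\OO\cap D(p_0,\rho)$ shares a component of $D(p_0,\rho)\setminus\gamma$ with one of the caps --- forcing that cap into $\OO$, a contradiction. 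This disc-crosscut bookkeeping is the technical heart of the proof; the remaining ingredients are routine plane topology: enlarging $J'$ within $J$, joining the inner endpoints $z_1,z_2$ of the two accessing segments by a polygonal arc inside the open connected set $\OO$, extracting a simple arc $\sigma$ from the resulting finite union of segments (automatically of finite length, since $\mathcal{H}^1(\sigma)$ is at most the total length of those segments), and invoking the standard fact that a crosscut of a Jordan domain divides it into two Jordan subdomains with the stated boundaries.

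An alternative route to the accessibility input, avoiding the disc bookkeeping, is to take a Riemann map $\varphi:\mathbb D\to\OO$ extended to a homeomorphism of the closures (Carath\'eodory), invoke the local form of the F.\ and M.\ Riesz theorem --- rectifiability of $J$ forces $\varphi$ to have a finite nonzero angular derivative at a.e.\ point of $\varphi^{-1}(J)$, hence at a dense set of them --- and push forward a crosscut of $\mathbb D$ consisting of two short radial segments ending at two such points near $\varphi^{-1}(p)$ and $\varphi^{-1}(q)$, joined by a concentric circular arc. Finite angular derivatives make the radial images rectifiable and the circular-arc image has finite length trivially, so $\sigma:=\varphi(\text{that crosscut})$ is a rectifiable crosscut of $\OO$, and one concludes exactly as in the first paragraph.
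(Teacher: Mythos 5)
Your reduction of the lemma to the statement that every nondegenerate compact subarc of $J$ contains a dense set of points accessible from $\Omega=\mathrm{int}(\gamma)$ by a segment meeting $\gamma$ only at that point, followed by the assembly of $\gamma'$ from a subarc $K'\subset J$ and a rectifiable crosscut $\sigma$, is a reasonable plan; the genuine gap sits exactly where you place the ``technical heart''. From the facts that the closed component $C_0$ of $\gamma\cap\overline{D(p_0,\rho)}$ through $p_0$ is a crosscut of the disc lying in the thin band and that it separates the two caps, you ``deduce that any point of $\Omega\cap D(p_0,\rho)$ shares a component of $D(p_0,\rho)\setminus\gamma$ with one of the caps''. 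That deduction does not follow: $\gamma\cap D(p_0,\rho)$ need not reduce to $C_0$, because differentiability at the single parameter $s_0$ does not prevent the arc $g\bigl((s_0-\eta,s_0+\eta)\bigr)$ from leaving and re-entering the disc many times. Each re-entry produces a further crosscut inside the band, and these create additional ``sliver'' components of $D(p_0,\rho)\setminus\gamma$ contained in the band. Nothing in your argument excludes the scenario in which both caps lie in the exterior while the points of $\Omega$ near $p_0$ (which exist, since $p_0\in\partial\Omega$) all lie in such slivers, approaching $p_0$ tangentially; so no contradiction is reached as written.

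The claim you need is nonetheless true, and your own estimates almost prove it: for every $s\neq s_0$ in the window the component of $g(s)-p_0$ along $\tau$ has the same sign as $s-s_0$ and modulus at least $(1-\epsilon)|s-s_0|$, so $\gamma$ meets the normal segment $\{p_0+t\nu:\ |t|\le\rho/2\}$ only at $p_0$ and crosses the line through it exactly once inside the disc (strictly on one side for $s<s_0$, strictly on the other for $s>s_0$, the rest of $\gamma$ staying outside $D(p_0,\rho)$). Hence the winding numbers of $\gamma$ about $p_0+t\nu$ and $p_0-t\nu$ differ by one, so these two points lie in different complementary components of $\gamma$, and therefore exactly one cap lies in $\Omega$; some crossing/parity argument of this kind is the missing ingredient, not the separation of the caps by $C_0$. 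Your alternative route via a Riemann map and the local F.\ and M.\ Riesz theorem (finite nonzero angular derivative almost everywhere on $\varphi^{-1}(J)$) is viable and bypasses this issue, but it rests on a substantially less elementary quoted theorem and is only sketched. Note also that the paper itself gives no proof of this lemma: it simply cites \cite{LioNest}, so the comparison to be made is with that reference rather than with an argument in the present text.
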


\begin{proof}
The proof of lemma 2.7. is contained in \cite{LioNest}.
\end{proof}

\begin{lemma} \label{2.8.}
Let $\OO$ be a domain and $J \subset \partial{\OO}$ a relatively open subset of its boundary. Suppose that $\ext \dif \overline{\OO}$ is connected and that $\bd{\OO} \dif J$ is contained in $\overline{\phantom{.}{\overline{\OO}}^{\mathrm{c}}\phantom{.}}$. Then, for every $m \in \N$, the set $\Delta_m=\{z \in \OO \cup J : \dist(z,\bd{\OO} \dif J) \geq \frac{1}{m}\}$ has connected complement in $\ext$, where $\ext=\Co \cup \{\infty\}$.
\end{lemma}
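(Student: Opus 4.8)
The plan is to describe the complement $\ext\dif\Delta_m$ completely and then prove directly that it is connected, by joining every one of its points to the single connected set $\ext\dif\overline{\OO}$ supplied by the hypothesis.

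First I would rewrite $\Delta_m$. Since $\dist(z,\bd\OO\dif J)\gr\frac1m$ forbids $z\in\bd\OO\dif J$, and $\overline{\OO}=\OO\cup J\cup(\bd\OO\dif J)$, one gets $\Delta_m=\{z\in\overline{\OO}:\dist(z,\bd\OO\dif J)\gr\frac1m\}$; in particular $\Delta_m\subset\overline{\OO}$, and hence
\[
\ext\dif\Delta_m=(\ext\dif\overline{\OO})\cup N_m,\qquad N_m:=\{z\in\overline{\OO}:\dist(z,\bd\OO\dif J)<\tfrac1m\}.
\]
If $\bd\OO\dif J=\emptyset$ then $N_m=\emptyset$ and $\ext\dif\Delta_m=\ext\dif\overline{\OO}$ is connected by hypothesis, so from now on I assume $\bd\OO\dif J\neq\emptyset$; note also that then $\ext\dif\overline{\OO}\neq\emptyset$, since any $p\in\bd\OO\dif J$ lies in $\tct$.

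Next comes the key construction: for a fixed $z_0\in N_m$ I would build a polygonal path from $z_0$ to a point of $\ext\dif\overline{\OO}$ that stays inside $\ext\dif\Delta_m$. As $\dist(z_0,\bd\OO\dif J)<\frac1m$, pick $p\in\bd\OO\dif J$ with $|z_0-p|<\frac1m$; since $p\in\tct$, pick $q\notin\overline{\OO}$ with $|q-p|<\frac1m$. On each of the two segments $[z_0,p]$, $[p,q]$ the distance to $p$ is maximised at an endpoint, so every point $w$ of $[z_0,p]\cup[p,q]$ satisfies $|w-p|\ls\max(|z_0-p|,|q-p|)<\frac1m$, whence $\dist(w,\bd\OO\dif J)\ls|w-p|<\frac1m$ because $p\in\bd\OO\dif J$. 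Consequently $w\notin\Delta_m$: either $w\notin\overline{\OO}$, or $w\in\overline{\OO}$ and then $w\in N_m$. Thus the whole path lies in $\ext\dif\Delta_m$ and joins $z_0$ to $q\in\ext\dif\overline{\OO}$.

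Finally I would assemble: the union of $\ext\dif\overline{\OO}$ with all of these paths (one per $z_0\in N_m$) is a union of connected sets each meeting the connected set $\ext\dif\overline{\OO}$, hence connected; it contains every point of $N_m$ and is contained in $\ext\dif\Delta_m$, so by the displayed decomposition it equals $\ext\dif\Delta_m$, which is therefore connected. The only step that needs care — and the one place both hypotheses are used — is the path construction: the inclusion $\bd\OO\dif J\subset\tct$ is what allows the path to escape $\overline{\OO}$ through a point of $\bd\OO\dif J$, the uniform $\frac1m$-collar in the definition of $\Delta_m$ is what keeps the whole detour clear of $\Delta_m$, and the connectedness of $\ext\dif\overline{\OO}$ is what finally welds all the pieces into one.
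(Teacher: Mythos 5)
Your proof is correct and follows essentially the same route as the paper: both identify $\ext\dif\Delta_m$ as $\ext\dif\overline{\OO}$ together with the points at distance less than $\frac{1}{m}$ from $\bd{\OO}\dif J$, and both use the hypothesis $\bd{\OO}\dif J\subset\tct$ to attach that collar to the connected set $\ext\dif\overline{\OO}$. The only cosmetic difference is the connecting device: the paper takes the open discs $D(w,\frac{1}{m})$, $w\in\bd{\OO}\dif J$, each of which is connected and meets $\ext\dif\overline{\OO}$, whereas you join each collar point to $\ext\dif\overline{\OO}$ by an explicit two-segment polygonal path.
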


\begin{proof}
Let $m \in \N$. The set $\Delta_m$ can be written as follows: $\Delta_m= \tomi_{w \in \bd{\OO} \dif J} D(w,\frac{1}{m})^{\ct} \cap (\OO \cup J)$. Therefore, we have that $\ext \dif \Delta_m =  \enosi_{w \in \bd{\OO} \dif J} D(w,\frac{1}{m}) \cup (\ext \dif \overline{\OO}) \cup (\bd{\OO} \dif J)$. The difference $\bd{\OO} \dif J$ is contained in the union of $D(w,\frac{1}{m}), w \in \bd{\OO} \dif J$, hence $\ext \dif \Delta_m =  \enosi_{w \in \bd{\OO} \dif J} D(w,\frac{1}{m}) \cup (\ext \dif \overline{\OO})$. Since every open disc is connected, $\ext \dif \overline{\OO}$ is connected and intersects every open disc $D(w,\frac{1}{m}), w \in \bd{\OO} \dif J$, we conclude that the set $\ext \dif \Delta_m$ is also connected.
\end{proof}

\begin{definition} \label{2.9.}
Let $L \subset \bd{D}$ be a relatively open subset of the unit circle. We say that a continuous function $f \in C(L)$ belongs to $Z_L$, if for every $\theta \in L$ we have that $\displaystyle \limsup_{y \lra \theta} \left | \frac{Ref(y)-Ref(\theta)}{y-\theta} \right | = +\infty$ and $\displaystyle \limsup_{y \lra \theta} \left | \frac{Imf(y)-Imf(\theta)}{y-\theta} \right | = +\infty$

\end{definition} 
\

\begin{lemma} \label{2.10.}
Let $\OO$ be a domain and $J \subset \bd{\OO}$ a relatively open subset of its boundary. If $K \subset \OO \cup J$ is a compact set, then there exists a larger compact set $K \subset E \subset \OO \cup J$, such that $E=\ch{E \cap \OO}$.
\end{lemma}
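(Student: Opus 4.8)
The plan is to ``thicken'' $K$ slightly inside $\OO\cup J$: cover $K$ by finitely many small balls, intersect their union with $\OO$, and take the closure. The relative openness of $J$ enters only when choosing the radii of the balls centered at the boundary points of $K$.

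First I would note that $L:=K\cap\bd{\OO}$ is a closed subset of the compact set $K$, hence compact, and that $L\subset J$, since $K\subset\OO\cup J$ and $\OO$ is open. Because $J$ is relatively open in $\bd{\OO}$, the set $\bd{\OO}\dif J$ is closed in $\Co$; being disjoint from the compact set $L$, it satisfies $\de:=\dist(L,\bd{\OO}\dif J)>0$ (if $\bd{\OO}\dif J=\emptyset$ there is nothing to arrange). For each $z\in L$ I then pick $r_z\in(0,\de/2)$, so that $\ch{D(z,r_z)}\cap\bd{\OO}\subset J$, and for each $z\in K\cap\OO$ I pick $r_z>0$ with $\ch{D(z,r_z)}\subset\OO$. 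By compactness there are $z_1,\dots,z_n\in K$ with $K\subset U:=\enosi_{i=1}^{n}D(z_i,r_{z_i})$, and $U$ is bounded.

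Next I would set $E:=\ch{U\cap\OO}$ and check the required properties. Compactness is immediate, as $U\cap\OO$ is bounded. For $K\subset E$: points of $K\cap\OO$ lie in $U\cap\OO\subset E$; and if $z\in L$ then $z\in D(z_j,r_{z_j})$ for some $j$, necessarily with $z_j\in L$ (a ball centered at a point of $K\cap\OO$ has closure inside $\OO$, hence misses $\bd{\OO}$), and since $D(z_j,r_{z_j})$ is an open neighbourhood of the boundary point $z$ it meets $\OO$, so $z\in\ch{D(z_j,r_{z_j})\cap\OO}\subset E$. Finally $E=\ch{E\cap\OO}$, because $U\cap\OO\subset E\cap\OO\subset E$ with $E$ closed gives $E=\ch{U\cap\OO}\subset\ch{E\cap\OO}\subset\ch{E}=E$.

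The step I expect to need the most care is $E\subset\OO\cup J$. Let $w\in E=\ch{U\cap\OO}$. Since $w$ is a limit of points of $\OO$, we have $w\in\ch{\OO}$, so either $w\in\OO$ (and we are done) or $w\in\bd{\OO}$. In the latter case write $w=\lim_k w_k$ with $w_k\in U\cap\OO$; passing to a subsequence, all $w_k$ lie in a single ball $D(z_j,r_{z_j})$, so $w\in\ch{D(z_j,r_{z_j})}$. As $w\in\bd{\OO}$, the index $z_j$ cannot belong to $K\cap\OO$, hence $z_j\in L$, and then $w\in\ch{D(z_j,r_{z_j})}\cap\bd{\OO}\subset J$ by the choice of $r_{z_j}$. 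This completes the proof.
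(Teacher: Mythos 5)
Your proof is correct. It rests on the same basic idea as the paper's --- thicken $K$ near $J$ by small discs whose radius is controlled by the positive distance from $K\cap\bd{\OO}$ to the closed set $\bd{\OO}\dif J$ --- but the execution differs enough to change which verifications carry the weight. The paper uses one uniform radius $\e=\frac{1}{2}\dist(K\cap J,\bd{\OO}\dif J)$ and sets $E=K\cup\bigl(\enosi_{\tau\in K\cap J}\ch{D(\tau,\e)}\bigr)\cap\ch{\OO}$, so $K\subset E$ is built in, but compactness requires a sequential argument for the union of closed discs over all of $K\cap J$, and the key property $E=\ch{E\cap\OO}$ is left as an easy check. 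You instead extract a finite subcover $U$ of $K$ by balls (with closures in $\OO$ at interior centers, and radius below $\de/2$ at centers on $\bd{\OO}$) and define $E=\ch{U\cap\OO}$; this makes compactness immediate (closed and bounded) and renders $E=\ch{E\cap\OO}$ automatic via the sandwich $U\cap\OO\subset E\cap\OO\subset E$, at the price of having to verify explicitly that $K\subset E$ (done correctly, using that a ball centred at a point of $K\cap\bd{\OO}$ meets $\OO$ in every neighbourhood of its centre) and that $E\subset\OO\cup J$ (done correctly via the pigeonhole/subsequence argument and the choice $r_z<\de/2$). Your version is slightly longer but spells out the inclusion $E\subset\OO\cup J$ and the approximation property, which the paper's proof passes over as evident; both arguments are sound.
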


\begin{proof}
Obviously $K \cap J = K \cap \ch{\OO}$ is compact and disjoint from the closed set $\bd{\OO} \dif J$. Thus, $\dist(K \cap J, \bd{\OO} \dif J)>0$. We set $\e= \displaystyle \frac{1}{2} \dist(K \cap J, \bd{\OO} \dif J)$ and consider the set $E=K \cup \enosi_{\tau \in K \cap J} \ch{D(\tau,\e)} \cap \ch{\OO}$. We claim that $E$ is compact. Obviously, it suffices to prove that set $\enosi_{\tau \in K \cap J} \ch{D(\tau,\e)}$ is compact. Consider a sequence $\xn$ in $\enosi_{\tau \in K \cap J} \ch{D(\tau,\e)}$. Then, $x_n= \tau_n + \delta_n$, where $\tn$ is a sequence in $K \cap J$ and $|\delta_n| \leq \e$, for every $ n \in \N $. Therefore, we can find a convergent subsequences of $\tn$ and $\dn$, which implies that $\xn$ has a convergent subsequence in $\enosi_{\tau \in K \cap J} \ch{D(\tau,\e)}$. It follows easily from the way that $E$ was defined, that every point in $E$ can be approximated by points in $E \cap \OO$. Hence, the proof is complete. \\ \\
\end{proof}

\section{X(\boldmath ${\Omega}$,V) spaces in $\mathbb{C}$}
\justify
Let $\OO \subset \Co$ be a domain and $V \subset \OO$ an open set. We consider the set $X(\OO,V)=H(\OO) \cap H^{\infty}(V) = \{f \in H(\OO): f_{|V} \text{  is bounded}\}$. If $\overline{V} \subset \OO$ and $V$ is bounded, then obviously $X(\OO,V)=H(\OO)$ and the space is endowed with its usual Fr\'{e}chet topology. Furthermore, $\OO$ is always an $H(\OO)$-domain of holomorphy and the set of non-extendable functions in $H(\OO)$ is $G_{\delta}$ and dense in this space \cite{DoH},\cite{Range}. If $\overline{V} \subset \OO$ and $V$ is not bounded, we may have $X(\OO,V)\neq H(\OO)$ but again we can prove that $\OO$ is a $X(\OO,V)$-domain of holomorphy. Actually, this case is covered in the proof of Theorem \ref{3.1.} stated below.
\par Suppose $\overline{V} \cap \bd{\OO} \neq \emptyset$. The natural topology in this case is the Fr\'{e}chet topology induced by the seminorms $  \norm{f|_{K_m}}_{\infty}$ and $\norm{f|_{V}}_{\infty}$, where, $\{K_m\}_{m=1}^{\infty}$ is an exhaustive sequence of compact subsets of $\OO$. Obviously, $X(\OO,V)$ satisfies the requirements of Theorem \ref{2.6.}. Therefore, in order to prove that $\OO$ is a $X(\OO,V)$-domain of holomorphy it suffice to find $g_{b_1,b_2}=g \in X(\OO,V)$, for every pair of balls $(b_1,b_2)$, such that $
b_1\subset\overline b_1\subset b_2\cap\OO, \ \ b_2\cap\OO^c\neq\emptyset $, so that $g|_{b_1}$ does not admit any bounded holomorphic extension on $b_2$.\\ \\

\begin{theorem} \label{3.1.}
Let $\OO \subset \Co$ be a domain and $V \subset \OO$, an open set, such that $\overline{V} \cap \bd{\OO} \neq \emptyset$. We assume that for every $\zeta \in \overline{V} \cap \bd{\OO}$ there exists a sequence $\wn$ contained in $\overline{\OO}^{\mathrm{c}}$ with $w_n \lra \zeta$. Then, $\OO$ is a $X(\OO,V)$-domain of holomorphy and the set $\{f \in X(\OO,V): f\text{ is non-extendable}\}$ is dense and $G_{\delta}$ in $X(\OO,V)$.
\end{theorem}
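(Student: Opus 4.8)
The plan is to apply Theorem \ref{2.6.}, using the reduction already recorded just before the statement. Concretely, I will prove that $\OO$ is a weak $X(\OO,V)$-domain of holomorphy: for every pair of open balls $(b_1,b_2)$ with $b_1\subset\overline{b}_1\subset b_2\cap\OO$ and $b_2\cap\OO^{c}\neq\emptyset$, I must produce $g=g_{b_1,b_2}\in X(\OO,V)$ whose restriction to $b_1$ has no bounded holomorphic extension to $b_2$. Since $X(\OO,V)$, with the topology induced by the seminorms $\norm{f|_{K_m}}_{\infty}$ ($m\in\N$) and $\norm{f|_{V}}_{\infty}$, is a Fr\'echet space on which convergence forces pointwise convergence on $\OO$, Theorem \ref{2.6.} then delivers the full conclusion, including density and the $G_{\delta}$ property.

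For the construction I would take $g(z)=\frac{1}{z-p}$ for a point $p\in b_2$ chosen so that $p\notin\OO$ and $p\notin\overline{V}$. Such a $g$ lies in $H(\OO)$, and on $V$ one has $|g|\leq 1/\dist(p,\overline{V})<\infty$, so $g\in H^{\infty}(V)$; hence $g\in X(\OO,V)$. Moreover $g|_{b_1}$ admits no bounded holomorphic extension to $b_2$: since $b_1\subset\OO$ we have $p\notin b_1$, the set $b_2\dif\{p\}$ is connected, and any bounded holomorphic $F$ on $b_2$ with $F|_{b_1}=g|_{b_1}$ would, by the identity theorem, satisfy $F=g$ throughout $b_2\dif\{p\}$, which is impossible because $g$ is unbounded near $p\in b_2$ while $F$ is bounded.

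The only real task is therefore to find such a $p$, and this is where the hypothesis on $V$ enters. If $b_2\cap\overline{\OO}^{\mathrm{c}}\neq\emptyset$, pick $p$ in this open set; then $\dist(p,\overline{\OO})>0$, and since $\overline{V}\subset\overline{\OO}$ both requirements on $p$ hold. Otherwise $b_2\subset\overline{\OO}$, whence $\emptyset\neq b_2\cap\OO^{c}\subset\overline{\OO}\dif\OO=\bd{\OO}$; fix $\zeta\in b_2\cap\bd{\OO}$. If $\zeta\notin\overline{V}$, set $p=\zeta$. If $\zeta\in\overline{V}$, then $\zeta\in\overline{V}\cap\bd{\OO}$, so by hypothesis there is a sequence $w_n\in\overline{\OO}^{\mathrm{c}}$ with $w_n\to\zeta$; as $b_2$ is open and $\zeta\in b_2$, some $w_n$ lies in $b_2\cap\overline{\OO}^{\mathrm{c}}$, contradicting $b_2\subset\overline{\OO}$. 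So in this subcase $\zeta\notin\overline{V}$ as well, and $p=\zeta$ works.

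In every case a suitable $p$ exists, hence $\OO$ is a weak $X(\OO,V)$-domain of holomorphy and the proof concludes by Theorem \ref{2.6.}. I expect the (only) delicate point to be the subcase $b_2\subset\overline{\OO}$: there one must exclude that all of $b_2\cap\bd{\OO}$ is contained in $\overline{V}$ — which would obstruct the pole construction — and exactly this is ruled out by the assumption that every $\zeta\in\overline{V}\cap\bd{\OO}$ is approachable from $\overline{\OO}^{\mathrm{c}}$.
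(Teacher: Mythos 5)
Your proposal is correct and follows essentially the same route as the paper: reduce to the weak domain-of-holomorphy condition via Theorem \ref{2.6.} and use the pole function $g(z)=\frac{1}{z-p}$, with $p$ either a boundary point of $\OO$ at positive distance from $\overline{V}$ or a point of $\overline{\OO}^{\mathrm{c}}\cap b_2$ supplied by the approximation hypothesis. The only differences are cosmetic: you organize the cases by whether $b_2$ meets $\overline{\OO}^{\mathrm{c}}$ rather than by whether the chosen $\zeta\in b_2\cap\bd\OO$ lies in $\overline{V}$, and you spell out the identity-theorem step that the paper summarizes by saying the pole obstructs any bounded extension.
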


\begin{proof}
Consider a pair of balls $(b_1,b_2)$ such that $\bb$. The set $b_2$ is connected, therefore $b_2 \cap \bd{\OO} \neq \emptyset$. Let $\zeta \in b_2 \cap \bd{\OO}$. If $\zeta \in \bd{\OO} \dif \overline{V}$, we can choose $g=g_{b_1,b_2} \in X(\OO,V)$ to be the function $\displaystyle g(z)=\frac{1}{z-\zeta} ,\: z \in \OO$. Then, $g$ is holomorphic on $\OO$ and bounded on $V$ since $\dist(\zeta,\overline{V})>0$. Thus, $g \in X(\OO,V)$ and $g|_{b_1}$ does not admit a bounded holomorphic extension on $b_2$, since $\zeta$ is a pole. Consider the case $\zeta \in \bd{\OO} \cap \overline{V}$. By our assumptions, there exist points of $\overline{\OO}^{\mathrm{c}}$ arbitrarily close to $\zeta$. Hence, we can find a point $w \in \overline{\OO}^{\mathrm{c}} \cap b_2$. We set $ \displaystyle g=g_{b_1,b_2}(z)=\frac{1}{z-w}, \: z \in \OO$. Similarly to the previous case, $g$ is holomorphic on $\OO$ and bounded on $V$, thus $g \in X(\OO,V)$, but $g|_{b_1}$ does not admit a holomorphic and bounded extension on $b_2$. \\
\end{proof}

\noindent Now, we consider some examples of pairs $(\OO,V)$, as in Theorem \ref{3.1.}, for whom the assumptions of the theorem are not satisfied and we examine whether the conclusion holds or not. 

\begin{example}  \label{3.2.}
Let $\OO = D(0,1) \dif \{0\} = V$. Clearly the point $0$ can not be approximated by points outside of $\overline{\OO}$. Consider euclidean balls so that $\bb,\: 0 \in b_2$. Then, if $g=g_{b_1,b_2} \in X(\OO,V)$, the point 0 is a removable singularity for $g$. Therefore, $g|_{b_1}$ has always a bounded holomorphic extension to $b_2$, As a result, $\OO$ is not a $X(\OO,V)$-domain of holomorphy in the weak sense, hence it is not a $X(\OO,V)$-domain of holomorphy.
\end{example}

\begin{example}  \label{3.3.}
A natural generalization of the previous example can be obtained by replacing $\{0\}$ with a compact set $A \subset D(0,1)$ with $\gamma(A)=0$, where $\gamma(A)$ denotes the Ahlfors capacity of the set $A$.
\end{example}

\begin{example} \label{3.4.}
Let $\OO=D(0,5) \dif [0,1]$ and $V=D(0,2) \dif [0,1]$. Again, the assumptions of Theorem 3.1. are not satisfied as $[0,1] \cap \tct = \emptyset$. We will show though, that $\OO$ is, indeed, a $X(\OO,V)$-domain of holomorphy. Let $(b_1,b_2)$ be a pair of euclidean balls, such that $\bb$ and suppose that $b_2 \cap [0,1] \neq \emptyset$. Choose a point $\beta \in (0,1) \cap b_2$. The M\"{o}bius  transformation $z \longmapsto  w(z)=\frac{z-\beta}{z}$ maps $[0,\beta]$ to the half-line $[-\infty,0]$. It is a well known fact, that the domain $\Co \dif [-\infty,0]$ admits a holomorphic branch of logarithm with: $-\pi < Im(logw) < \pi$ for all $w \in \Co \dif [-\infty,0]$. Let $ f(z)=log(\frac{z-\beta}{z})$ for $z \in \OO$ and $g(z)=e^{-if(z)}, \: z \in \OO$. In that case, $|g(z)|=|e^{-if(z)}|=e^{Re(-if(z))}=e^{Im(f(z))} \in (e^{-\pi},e^{\pi})$ for $z \in V$; thus, $g \in X(\OO,V)$. 
\par We will show that $g|_{b_1}$ does not admit admit a holomorphic extension on $b_2$. Suppose, by contradiction, that $F$ is a holomorphic extension of  $g|_{b_1}$ on $b_2$. Since, $b_2 \dif [0,\beta]$ is open and connected, the principle of analytic continuation implies that $F(z)=e^{-ilog\frac{z-\beta}{z}}$ for all $z\in b_2 \dif [0,\beta]$, hence $e^{-\pi} < |F(z)| < e^{\pi}$ for all $z\in b_2 \dif [0,\beta]$. Furthermore, the function F is assumed to be continuous on $b_2$, therefore by taking limits, we conclude that $F(z) \neq 0$ for all $z \in b_2$. 
\par Consider a smaller ball $b_3$ such that $\overline{b_3} \subset b_2, \: 0 \notin b_3, \: \beta \in b_3$. Then, $F(z) \neq 0$ for all $z\in b_3$, $b_3$ is a disc and $F|_{b_3}$ is holomorphic, therefore there exists a holomorphic branch of $logF$ on $b_3$, namely there exists a holomorphic branch of $log(\frac{z-\beta}{z})$ on $b_3$. Furthermore, $ 0 \notin b_3$, so there exists a holomorphic branch of $logz$ on $b_3$. This implies the existence of a holomorphic branch of $log(z-\beta)$ on $b_3$, which is absurd.
\par Finally, if $b_2 \cap [0,1]$, then $b_2$ intersects the boundary of $\OO$ on the circle C(0,5). By choosing, $\zeta \in b_2 \cap \bd{\OO}$ and $f_{\zeta}(z)=\frac{1}{z-\zeta}, \: z\in \OO$, we are done. Thus, the proof is complete. 
\begin{flushright} $\square$ \end{flushright}
\end{example}

\noindent We now proceed to studying a property of functions belonging to the class $X(\OO,V)$ for pairs $(\OO,V)$ satisfying some additional assumptions. 

\begin{theorem}  \label{3.5.}
1) Let $\OO=V$ be a Jordan domain, such that its boundary contains an open Jordan arc $J$, so that every compact subarc $J' \subset J$ is rectifiable.\\
2) Let $\OO$ be a domain and $V \subset \OO$, a Jordan domain such that $\overline{V} \cap \bd{\OO}$ contains an open Jordan arc $J$, such that every compact subarc $J'$ is rectifiable. We also assume that for every $\zeta \in J$ there exists a radius $r=r_{\zeta}>0$, such that $D(\zeta,r) \cap V = D(\zeta,r) \cap \OO$.\\
In  both cases 1 and 2, every $f \in X(\OO,V)$ has non-tangential limits almost everywhere in $J$, with respect to the arclength measure. 
\end{theorem}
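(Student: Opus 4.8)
The plan is to reduce both cases to the classical Fatou theorem on the disc via a conformal map, using the rectifiability of compact subarcs of $J$ together with Lemma \ref{2.7.} to produce a Jordan domain on which the function is bounded. I begin with case 2 (case 1 being the special instance $\OO=V$). Fix $f \in X(\OO,V)$, so $f$ is holomorphic on $\OO$ and bounded on $V$, and fix a point $\zeta_0 \in J$. It suffices to show that $f$ has a non-tangential limit at arclength-a.e. point of a fixed compact subarc $J' \subset J$ containing $\zeta_0$ in its relative interior, since $J$ is a countable union of such subarcs. By hypothesis $J'$ is rectifiable, and applying Lemma \ref{2.7.} to the Jordan curve $\bd V$ (note $J \subset \bd V$), we extend $J'$ to a rectifiable Jordan curve $\gamma'$ whose interior $G$ is contained in $V$.

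Next I would pass to the disc. Let $\varphi : D(0,1) \to G$ be a Riemann map; since $\gamma'$ is a rectifiable Jordan curve, $\varphi$ extends to a homeomorphism $\overline{D(0,1)} \to \overline G$, and by the F. and M. Riesz theorem $\varphi'$ belongs to the Hardy space $H^1$ of the disc, so $\varphi$ maps subsets of $\bd D$ of Lebesgue measure zero to subsets of $\gamma'$ of arclength measure zero, and conversely; moreover arclength on $\gamma'$ corresponds under $\varphi$ to the measure $|\varphi'(e^{i\theta})|\,d\theta$ on $\bd D$, which is mutually absolutely continuous with $d\theta$. Now $g := f \circ \varphi$ is holomorphic on $D(0,1)$ and, since $G \subset V$ and $f$ is bounded on $V$, $g$ is a bounded holomorphic function on the disc. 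By Fatou's theorem $g$ has non-tangential limits at a.e. point of $\bd D$, hence at a.e. point of the open arc $L := \varphi^{-1}(J' \cap \gamma')$ corresponding to the part of $J'$ lying on $\gamma'$.

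It remains to transfer non-tangential convergence back through $\varphi$ and to check that, near such a boundary point, non-tangential approach to $J$ inside $\OO$ is the same as non-tangential approach inside $G$. For the first point one uses that a conformal map of the disc onto a Jordan domain with rectifiable boundary is, at a.e. boundary point, conformal in the sense of having a nonzero angular derivative (again a consequence of $\varphi' \in H^1$ and $\log \varphi' $ being of bounded characteristic): at such points $\varphi$ carries Stolz angles in $D(0,1)$ to nontangential approach regions in $G$ at the corresponding boundary point, so the limit of $g$ along a Stolz angle gives a nontangential limit of $f$ along the image region. For the second point I invoke the extra hypothesis in case 2: for $\zeta \in J$ there is $r_\zeta > 0$ with $D(\zeta,r_\zeta)\cap V = D(\zeta,r_\zeta)\cap \OO$, so in a neighbourhood of $\zeta$ the domains $\OO$ and $V$ (hence $\OO$ and $G$, since $G$ agrees with $V$ near $J'$) coincide, and a nontangential approach region to $J$ in $\OO$ at $\zeta$ is eventually contained in $G$; thus the nontangential limit of $f$ as a function on $G$ is also its nontangential limit as a function on $\OO$. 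Combining, $f$ has a nontangential limit at $\varphi(e^{i\theta})$ for a.e. $e^{i\theta} \in L$, i.e. at arclength-a.e. point of $J'\cap\gamma'$; since this construction can be carried out so that $\gamma'$ contains any prescribed compact subarc through $\zeta_0$, and $J$ is exhausted by countably many such subarcs, $f$ has nontangential limits arclength-a.e. on $J$.

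The main obstacle I anticipate is the careful bookkeeping in the last paragraph: making precise that "nontangential with respect to arclength on $J$" and "nontangential with respect to the Riemann parametrization of $\gamma'$" agree a.e., which rests on the a.e. existence of a finite nonzero angular derivative for $\varphi$ — a standard but non-trivial fact about conformal maps onto rectifiable Jordan domains — and on verifying that the approach regions nest correctly near the boundary. Everything else (the reduction to compact subarcs, the application of Lemma \ref{2.7.}, and Fatou's theorem for bounded holomorphic functions on the disc) is routine.
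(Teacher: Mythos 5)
Your argument is correct and essentially coincides with the paper's proof: the same reduction to compact subarcs of $J$, the same use of Lemma \ref{2.7.} to produce a rectifiable Jordan curve with interior $G\subset V$, a Riemann map $\varphi$ onto $G$ with $\varphi'\in H^1$, the countable exhaustion of $J$, and the same use of the local identity $D(\zeta,r)\cap V=D(\zeta,r)\cap\Omega$ to pass from case 1 to case 2. The only divergence is at the last step: where you transfer Fatou's theorem for the bounded function $f\circ\varphi$ back through $\varphi$ via the a.e.\ nonzero angular derivative and the null-set correspondence between $d\theta$ and arclength, the paper instead observes $(f\circ\varphi)\varphi'\in H^1(D)$, hence $f\in E^1(G)$, and cites Duren's Theorems 10.1 and 10.3, which package exactly that transfer.
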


\begin{proof}
Suppose (1) holds. Let $f \in X(\OO,V)$ and consider a compact subarc $J' \subset J$. By Lemma \ref{2.7.}, $J'$ can be extended to a rectifiable Jordan curve $\gamma$, such that the interior of $\gamma$ is contained in $V$. Let $G \subset V$ be the interior of $\gamma$ and fix a Riemann map $\phi:D \lra G$. Then, we have that $(f\circ \phi)\cdot \phi' \in H^{1}(D)$, because $f$ is bounded on $G$  and $\phi' \in H^{1}(D)$ by Theorem 3.12. of \cite{Duren}. By Theorem 10.1. of \cite{Duren} we have that $f \in E^{1}(G)$ and Theorem 10.3.  of \cite{Duren} gives us that $f$ has non-tangential limits almost everywhere on $J'$. Since, $J$ can be written as a countable union of compacts subarcs, the conclusion follows.
\par The proof of (2) is similar to the first one. Specifically, the same arguments yield the existence of $\displaystyle \mathrm{n.t.} \lim _{z \lra \zeta, z \in V} f(z)$ almost everywhere in $J$. The additional assumption that for every $\zeta \in J$ there exists a $r=r_{\zeta}>0$, such that $D(\zeta,r) \cap V = D(\zeta,r) \cap \OO$ yields that the $\displaystyle \mathrm{n.t.} \lim _{z \lra \zeta, z \in \OO} f(z)$ is essentially the same as the aforementioned, hence exists almost everywhere in $J$ with respect to the arclength measure on $J$. \\
\end{proof}

\section{X(\boldmath ${\Omega}$,V) spaces in $\mathbb{C}^{d}$}
\justify
In this section we consider the spaces $X(\OO,V)$ for $V \subset \OO \subset \Co^d$ where $\OO$ is a domain and $V$ is a bounded open subset of $\OO$. We give sufficient conditions so that $\OO$ is a $X(\OO,V)$-domain of holomorphy.

\begin{definition} \label{4.1.}
We say that an open connected subset of $\Co^d, \: (d \geq 1)$ satisfies the star condition if for every point $\zeta \in \bd{\OO}$ there exists a point $w \in \overline{\OO}^\mathrm{c}$, arbitrarily close to $\zeta$, and $v \in \Co^d$ a non-zero vector, such that the $(d-1)$-dimensional complex hyperplane $H=w+\{v\}^{\bot}=\{z \in \Co^d : <z,v>=<w,v>\} $ does not intersect $\overline{\OO}$ \cite{Georg}.
\par One can see that if $\OO \subset \Co^d$ satisfies the star condition, then $\OO$ is a Caratheodory domain, namely $\OO=\overline{\OO}^{\circ}$. If $d=1$, every Caratheodory domain $\OO \subset \Co$ satisfies the star condition, whereas in dimensions $d \geq 2$ one can prove that at least, convex open sets satisfy the condition \cite{Georg}.
\end{definition}

\begin{theorem}  \label{4.2.}
Let $\OO \subset \Co^d$ be a domain which satisfies the star condition and $V\subset \OO$ a bounded open set. Then, $\OO$ is a $X(\OO,V)$-domain of holomorphy.  
\end{theorem}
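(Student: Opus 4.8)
The plan is to reduce, via Theorem \ref{2.6.}, to checking that $\OO$ is a \emph{weak} $X(\OO,V)$-domain of holomorphy, and then to exploit the separating complex hyperplane furnished by the star condition. As recorded before Theorem \ref{3.1.}, the space $X(\OO,V)$ topologized by the seminorms $\norm{f|_{K_m}}_\infty$ and $\norm{f|_{V}}_\infty$ is a Fr\'echet space in which convergence implies pointwise convergence on $\OO$ (the seminorm $\norm{f|_{V}}_\infty$ is finite by the definition of the space, and automatically so here since $V$ is bounded). Hence Theorem \ref{2.6.} applies, and it suffices to produce, for each pair of Euclidean balls $(b_1,b_2)$ with $\bb$, a function $f\in X(\OO,V)$ whose restriction to $b_1$ admits no bounded holomorphic extension to $b_2$.

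Fix such $b_1,b_2$. Since $b_2$ is connected and meets both $\OO$ and $\OO^c$, it meets $\bd\OO$; choose $\zeta\in b_2\cap\bd\OO$. By the star condition there is a point $w\in\overline{\OO}^{\ct}$ — which may be taken to lie in $b_2$, by choosing it close enough to $\zeta$ — and a nonzero $v\in\Co^d$ such that the complex hyperplane $H=\{z\in\Co^d:\langle z,v\rangle=\langle w,v\rangle\}$ does not meet $\overline{\OO}$. I would then set
\[
f(z)=\frac{1}{\langle z,v\rangle-\langle w,v\rangle},
\]
where $\langle\,\cdot\,,v\rangle$ is the $\Co$-linear functional $z\mapsto\sum_j z_j\overline{v_j}$, so that $f$ is holomorphic on $\Co^d\setminus H\supset\overline{\OO}$, in particular on $\OO$. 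Since $\overline V$ is compact (as $V$ is bounded) and disjoint from the closed set $H$ (because $\overline V\subset\overline{\OO}$ and $\overline{\OO}\cap H=\emptyset$), one has $\dist(\overline V,H)>0$, and $|\langle z,v\rangle-\langle w,v\rangle|=|v|\,\dist(z,H)$ is therefore bounded below on $V$; thus $f$ is bounded on $V$ and $f\in X(\OO,V)$.

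It remains to show $f|_{b_1}$ does not extend. Suppose $F$ is holomorphic on $b_2$ with $F|_{b_1}=f|_{b_1}$. Now $b_1\subset\OO\subset b_2\setminus H$, and $b_2\setminus H$ is connected: an affine complex hyperplane has real codimension $2$ in $\R^{2d}$ — it is a single point when $d=1$ — so deleting it does not disconnect a ball. Hence the identity theorem for holomorphic functions of several variables forces $F=f$ on all of $b_2\setminus H$. Taking $z_n\in b_2\setminus H$ with $z_n\to w\in H\cap b_2$, we obtain $F(z_n)=f(z_n)=\bigl(|v|\,\dist(z_n,H)\bigr)^{-1}\to\infty$, contradicting the continuity of $F$ at $w$. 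Therefore $f|_{b_1}$ admits no holomorphic extension to $b_2$, a fortiori no bounded one, and via Theorem \ref{2.6.} the proof is complete.

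The Fr\'echet-space verification, the boundedness of $f$ on $V$, and the invocation of the identity theorem are all routine. The one step that actually carries the argument — and the place to be careful — is the passage through the star condition: it must simultaneously provide a complex hyperplane $H$ avoiding $\overline{\OO}$ and a singular point $w$ of $f$ lying inside $b_2$, so that the explicit function displayed above is at once holomorphic near $\overline{\OO}$, bounded on $V$, and genuinely unbounded on $b_2$. Recording that $b_2\setminus H$ is connected is the only mild topological point that needs attention.
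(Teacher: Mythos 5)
Your proposal is correct and follows essentially the same route as the paper: reduce via Theorem \ref{2.6.} to the weak condition, use the star condition to place $w\in b_2\cap\overline{\OO}^{\ct}$ with a separating hyperplane $H$, take the reciprocal of the associated affine linear functional, and conclude by connectedness of $b_2\setminus H$ and the identity theorem. You merely spell out details the paper leaves implicit (the distance estimate giving boundedness on $V$, the codimension-two argument), so no further comparison is needed.
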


\begin{proof}
As we have previously discussed, it suffices to prove that for every pair of Euclidean balls $(b_1,b_2)$, such that $\bb$, there is a function $f \in X(\OO,V)$, such that $f|_{b_1}$ does not admit a bounded holomorphic extension on $b_2$.
\par The set $b_2$ is connected and intersect both $\OO$ and $\OO^\mathrm{c}$. Therefore, there exists a point $\zeta \in \bd{\OO} \cap b_2$. By the star condition, there exists a point $w \in b_2 \cap \overline{\OO}^\mathrm{c}$, $w=(w_1,w_2,...,w_d)$ and a complex hyperplane  $H=w+\{v\}^{\bot}$ of complex dimension $d-1$, where $v=(v_1,v_2,...,v_d)$ is a non-zero vector of $\Co^d$, such that $H$ does not intersect $\overline{\OO}$. Consider the function $z \longmapsto \displaystyle f(z)=\frac{1}{(z_1-w_1)v_1+...+(z_d-w_d)v_d}$ for $z \in H^\mathrm{c}$. Since $V$ is bounded, it follows easily that $f \in X(\OO,V)$. Suppose that $f_{|b_1}$ has a bounded holomorphic extension $F$ on $b_2$. The set $b_2 \cap H^\mathrm{c}$  is open and connected. This can be shown by counting real dimensions. Specifically, $b_2$ has real dimension $2d$, whereas $H$ has real dimension $2d-2$. Hence, the principle of analytic continuation implies that $F(z)=f(z)$ for all $z \in b_2 \cap H^{\mathrm{c}}$, which contradicts the fact that $F$ is bounded on $b_2$.

\end{proof}

\noindent Next, we present a second condition under which the conclusion of Theorem 4.2. remains valid. 

\begin{theorem}  \label{4.3.}
Let $\OO \subset \Co^d$ be an open pseudoconvex set and $V \subset \OO$ a bounded open set, such that $\overline{\OO}$ has a neighborhood basis of pseudoconvex open sets. Furthermore, we suppose that $\OO=\overline{\OO}^{\circ}$. Then, we conclude that $\OO$ is a $X(\OO,V)$-domain of holomorphy.
\end{theorem}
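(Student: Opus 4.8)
}

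The plan is to reduce to the same situation as in the proof of Theorem \ref{4.2.}: for an arbitrary pair of Euclidean balls $(b_1,b_2)$ with $\bb$, I want to produce a function $f\in X(\OO,V)$ whose restriction to $b_1$ has no bounded holomorphic extension to $b_2$. Since $\OO=\overline{\OO}^{\circ}$, the ball $b_2$ meets $\overline{\OO}^{\ct}$; indeed $b_2\cap\OO^{\ct}\neq\emptyset$ and $b_2$ is open, so $b_2$ cannot be contained in $\overline{\OO}$ (otherwise $b_2\subset\overline{\OO}^{\circ}=\OO$, contradiction), hence $b_2\cap\overline{\OO}^{\ct}\neq\emptyset$. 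Fix a point $w\in b_2\cap\overline{\OO}^{\ct}$ and a small ball $b$ around $w$ with $\overline b\subset b_2\cap\overline{\OO}^{\ct}$, so that $\overline b\cap\overline{\OO}=\emptyset$.

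The key step is to use the hypothesis that $\overline{\OO}$ has a neighborhood basis of pseudoconvex open sets. Choose a pseudoconvex open set $U$ with $\overline{\OO}\subset U$ and $U\cap\overline b=\emptyset$ (possible since $\overline b$ is a compact set disjoint from the closed set $\overline{\OO}$, so its complement is a neighborhood of $\overline{\OO}$ and intersecting with a basis element gives such a $U$; replacing $U$ by a connected component containing $\OO$ keeps it pseudoconvex and open). Now $w\notin U$. Since $U$ is pseudoconvex, it is a domain of holomorphy, so there exists $f\in H(U)$ that does not extend holomorphically past any boundary point of $U$ — in particular $f$ is non-extendable as an element of $H(U)$. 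Restricting, $f\in H(\OO)$. Because $V$ is a bounded open subset of $\OO$, its closure $\overline V$ is a compact subset of $\Co^d$; however $\overline V$ need not lie inside $U$. To control $\sup_V|f|$ I should instead pick $U$ more carefully: choose the pseudoconvex neighborhood $U$ so small that additionally $\overline V\subset U$ is false in general, so the cleanest route is to take $U$ from the basis with $\overline{\OO}\subset U$ and $w\notin U$, note $\overline V\subset\overline{\OO}\subset U$ is \emph{not} available unless $\overline V\subset\OO$; but in fact $V\subset\OO\subset\overline{\OO}\subset U$, and $V$ bounded forces $\overline V\subset U$ provided $U\supset\overline{\OO}\supset\overline V$, which holds since $\overline V\subset\overline{\OO}$. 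Thus $\overline V$ is a compact subset of the open set $U$, and any $f\in H(U)$ is bounded on the compact set $\overline V$, hence on $V$; therefore $f\in X(\OO,V)$.

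Finally I must argue non-extendability across $b_2$. Suppose for contradiction that $F:b_2\to\Co$ is bounded holomorphic with $F|_{b_1}=f|_{b_1}$. Consider $W=U\cup b_2$; this is open and connected (both contain points of $b_1$, as $b_1\subset\OO\subset U$ and $b_1\subset b_2$). On the connected open set $U\cap b_2\supset b_1$ the two functions $f$ and $F$ agree on $b_1$, hence on all of $U\cap b_2$ by analytic continuation — here I use that $U\cap b_2$ is connected, which needs a brief justification (e.g. $U\cap b_2$ is open and any of its components meeting $b_1$ equals the one through $b_1$; one should check $U\cap b_2$ is connected, or work with the component of $U\cap b_2$ containing $b_1$ as in Definition \ref{2.2.}). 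Gluing $f$ on $U$ and $F$ on $b_2$ gives a holomorphic function on $W$ extending $f$; but $b_2\not\subset U$ since $w\in b_2\setminus U$, so $W$ is strictly larger than $U$ near a boundary point of $U$, contradicting that $f$ does not extend past $\partial U$. Hence no such $F$ exists, and by Theorem \ref{2.6.} (whose hypotheses $X(\OO,V)$ satisfies) $\OO$ is a $X(\OO,V)$-domain of holomorphy.

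The main obstacle I expect is the topological bookkeeping around connectedness of $U\cap b_2$ and the precise invocation of "non-extendable on $U$": one has to make sure the chosen $f\in H(U)$ genuinely fails to extend across the relevant boundary point $w$ of $U$ (or more safely, across \emph{any} boundary point), and that the gluing argument only requires extendability in the sense of Riemann domains (Definition \ref{2.2.}), which is the correct weak notion matching Theorem \ref{2.6.}. Using the component of $U\cap b_2$ through $b_1$ throughout, rather than asserting connectedness, sidesteps the difficulty cleanly.
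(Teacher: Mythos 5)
Your overall strategy is sound and close in spirit to the paper's, but the one step you wrote out in full --- gluing $f$ on $U$ and $F$ on $b_2$ to a holomorphic function on $W=U\cup b_2$ --- is precisely the step that does not work: the identity theorem gives $f=F$ only on the component of $U\cap b_2$ containing $b_1$, while on the other components of $U\cap b_2$ the two functions may disagree, so the glued function need not be well defined; and ``$W$ strictly larger than $U$'' is by itself no contradiction unless the extension is through a component of $W\cap U$ as in Definition \ref{2.1.}. The repair you yourself indicate is the correct one and it makes gluing unnecessary: since $\overline{b_1}\subset b_2\cap\OO\subset b_2\cap U$ and $w\in b_2\cap U^{\ct}$, a bounded holomorphic $F$ on $b_2$ with $F|_{b_1}=f|_{b_1}$ already witnesses extendability of $f\in H(U)$ in the sense of Riemann domains (Definition \ref{2.2.}), which by Proposition \ref{2.3.} contradicts the choice of $f$ as non-extendable on the connected pseudoconvex neighborhood $U$ (a domain of holomorphy, hence the domain of existence of some function). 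With that replacement, and with your (correct) observations that $b_2\cap\overline{\OO}^{\ct}\neq\emptyset$ follows from $\OO=\overline{\OO}^{\circ}$ and that $\overline V\subset\overline{\OO}\subset U$ gives boundedness on $V$, your proof is complete.

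The paper's proof follows the same skeleton --- choose a pseudoconvex $G\supset\overline{\OO}$ omitting a point $\zeta\in b_2\cap\overline{\OO}^{\ct}$, get $f\in H(G)\subset X(\OO,V)$ bounded on $V$ because $\overline V\subset\overline{\OO}\subset G$ --- but produces the obstruction more concretely: it joins a point $A\in b_1$ to $\zeta$ by a segment inside $b_2$, takes the point $\sigma\in[A,\zeta]\cap\bd G$ where the segment meets $\bd G$, chooses $f\in H(G)$ unbounded on a sequence $z_n\to\sigma$ lying on the segment, hence in the component $Z$ of $b_2\cap G$ through $b_1$, and then the identity theorem on $Z$ contradicts boundedness of $F$ on $b_2$. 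That route needs only the Cartan--Thullen fact that a domain of holomorphy carries a function unbounded along a given sequence approaching the boundary, whereas yours invokes the stronger statement that a pseudoconvex domain admits a function non-extendable across every boundary point; in exchange, once Definitions \ref{2.1.}/\ref{2.2.} are in play your contradiction is shorter and avoids the segment and component bookkeeping.
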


\begin{proof}
It suffices to prove that for every pair of Euclidean balls $(b_1,b_2)$, such that $\bb$, there exists a function $f \in X(\OO,V)$, so that $f_{|b_1}$ does not admit a bounded holomorphic extension on $b_2$. The condition $\OO=\overline{\OO}^{\circ}$ implies that $b_2 \cap \overline{\OO}^\mathrm{c} \neq \emptyset$. 
Let $\zeta \in b_2 \cap \overline{\OO}^\mathrm{c}$. By our assumptions there exist a pseudoconvex open set $G \supset \overline{\OO}$, such that $\zeta \notin G$. Let $Z$ be the connected component of $b_1$ in $b_2 \cap G$. Choose a point $A \in b_1$. Then, $A\ \in G,\: \zeta \notin G$, so $[A,\zeta] \cap \bd{G} \neq \emptyset$. Let $\sigma$ be the nearest point of the compact set $[A,\zeta] \cap \bd{G}$ to $\zeta$. Then, the segment $[A,\sigma)$ is contained in $G$ and specifically $[A,\zeta) \subset Z$. Since $G$ is pseudoconvex, if $\zn$ is a sequence in $[A,\sigma)$ converging to $\sigma$, there exists a holomorphic function $f:G \lra \Co$, such that $\displaystyle \sup_{n \in \N} |f(z_n)|=\infty$ \cite{Range}. The fact that $\overline{V} \subset \overline{\OO} \subset G$ implies that $f \in X(\OO,V)$. Suppose that $f_{|b_1}$ has holomorphic and bounded extension, $F$, on $b_2$. We have that $F(z)=f(z)$ for all $z \in b_1 \subset Z$ and $Z$ is open and connected. Therefore, $F(z)=f(z)$ for all $z \in Z$ by the principle of analytic continuation. Hence, $\displaystyle \sup_{n \in \N} |F(z_n)|= \sup_{n \in \N} |f(z_n)|=\infty$, which contradicts the fact that $F$ is bounded on $b_2$. 
\end{proof}

\section{Generalized Bergman and other spaces}
\justify
\par In this section we consider natural generalizations of spaces $X(\OO)$, we studied in sections 1 and 4. Under the assumptions of Theorem \ref{2.6.} and some additional ones we prove that the domain $\OO$ is a $X(\OO)$-domain of holomorphy for these new spaces $X(\OO)$. 
\par Let $\OO$ be a domain, $V \subset \OO$ an open subset of $\OO$ and $F \subset \{0,1,2,...\}$. The set $X(\OO,V,F)$ is the set of functions $f \in H(\OO)$, such that $f_{|V}^{(l)}$ is bounded for every $l \in F$. We equip $X(\OO,V,F)$ with the topology induced by the following seminorms: $\displaystyle \norm{f|_{K_m}}_{\infty},\: m=1,2,..., \norm{f|_{V}^{(l)}}_{\infty}, \: l \in F$, where $\{K_m\}_{m=1}^{\infty}$ is an exhaustive sequence of compact subsets of $\OO$. Clearly, the assumptions of Theorem \ref{2.6.} are satisfied.

\begin{corollary}  \label{5.1.}
If for every point $\zeta \in \overline{V} \cap \bd{\OO}$, there exist points of $\overline{\OO}^\mathrm{c}$ arbitrarily close to $\zeta$, then $\OO$ is a $X(\OO,V,F)$-domain of holomorphy and the set $\{f \in X(\OO,V,F): f \text{ is non-extendable}\}$ is $G_\delta$ and dense in $X(\OO,V,F)$.
\end{corollary}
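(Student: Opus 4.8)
The plan is to reduce Corollary \ref{5.1.} to Theorem \ref{2.6.} by verifying that the function spaces $X(\OO,V,F)$ fit into the framework already established, and then to construct, for each pair of balls $(b_1,b_2)$ with $\bb$, an explicit function in $X(\OO,V,F)$ whose restriction to $b_1$ has no bounded holomorphic extension to $b_2$. Since the text already observes that $X(\OO,V,F)$, with the seminorms $\norm{f|_{K_m}}_\infty$ and $\norm{f|_V^{(l)}}_\infty$ for $l\in F$, satisfies the assumptions of Theorem \ref{2.6.} (it is a Fréchet space in which convergence implies pointwise convergence on $\OO$), it suffices by the last displayed remark in Section 2 to produce the required $f_{b_1,b_2}$.

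First I would fix a pair $(b_1,b_2)$ of Euclidean balls with $b_1\subset\overline b_1\subset b_2\cap\OO$ and $b_2\cap\OO^c\neq\emptyset$. As in the proof of Theorem \ref{3.1.}, connectedness of $b_2$ forces $b_2\cap\bd\OO\neq\emptyset$; pick $\zeta\in b_2\cap\bd\OO$. If $\zeta\in\bd\OO\dif\overline V$, set $g(z)=\frac{1}{z-\zeta}$ for $z\in\OO$; if $\zeta\in\bd\OO\cap\overline V$, use the hypothesis to pick $w\in\overline\OO^{\,\ct}\cap b_2$ and set $g(z)=\frac{1}{z-w}$ for $z\in\OO$. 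In either case $g\in H(\OO)$, and the pole lies at positive distance from $\overline V$ (respectively from $\overline V$, since $w\notin\overline\OO\supset\overline V$), so $g$ and all its derivatives $g^{(l)}$ are bounded on $V$; hence $g\in X(\OO,V,F)$ for every $F$. Finally, $g|_{b_1}$ has no bounded holomorphic extension to $b_2$, because such an extension would, by analytic continuation on the connected open set $b_2\dif\{\zeta\}$ (resp. $b_2\dif\{w\}$), agree with $g$ there and thus be unbounded near the pole. This is exactly the argument of Theorem \ref{3.1.}, and it transfers verbatim since boundedness of $g$ on $V$ trivially upgrades to boundedness of each $g^{(l)}$ on $V$.

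Having produced $f_{b_1,b_2}\in X(\OO,V,F)$ with the stated property for every admissible pair $(b_1,b_2)$, Definition \ref{2.5.} shows $\OO$ is a weak $X(\OO,V,F)$-domain of holomorphy, and Theorem \ref{2.6.} then gives both that $\OO$ is a $X(\OO,V,F)$-domain of holomorphy and that $\{f\in X(\OO,V,F): f\text{ is non-extendable}\}$ is $G_\delta$ and dense. I do not anticipate a genuine obstacle here: the only point needing a word of care is checking that the seminorm $\norm{g^{(l)}|_V}_\infty$ is finite, which follows because the rational function $g$ extends holomorphically to a neighborhood of $\overline V$ (the singularity being at distance $\ge\dist(\zeta,\overline V)>0$, or $\ge\dist(w,\overline V)>0$), so $g^{(l)}$ is continuous on the compact set $\overline V\cap\overline\OO$ away from the pole and bounded on all of $V$. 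Once that is noted, the corollary is an immediate consequence of the Baire-category machinery of Theorem \ref{2.6.}.
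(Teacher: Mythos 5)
Your proposal is correct and follows essentially the same route as the paper: pick $\zeta\in b_2\cap\bd\OO$, split into the cases $\zeta\in\bd\OO\dif\overline V$ and $\zeta\in\bd\OO\cap\overline V$, use $1/(z-\zeta)$ or $1/(z-w)$ with $w\in\overline\OO^{\mathrm{c}}\cap b_2$, note that all derivatives stay bounded on $V$ because the pole is at positive distance from $\overline V$, and conclude via Theorem \ref{2.6.}. (Only a cosmetic remark: the appeal to compactness of $\overline V$ is unnecessary and not guaranteed when $V$ is unbounded; the positive distance of the pole from $\overline V$ already bounds each $g^{(l)}$ on $V$, as your main argument states.)
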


\begin{proof}
Let $(b_1,b_2)$ be a pair of Euclidean balls, such that $\bb$. The set $b_2 \cap \bd{\OO}$ is non-empty. Choose a point $\zeta \in b_2 \cap \bd{\OO}$. If $\zeta \in \bd{\OO} \dif \overline{V}$, then for $ \displaystyle f_{\zeta}(z)=\frac{1}{z-\zeta}, \: z \in \OO, \: f^{(l)}$ remains bounded on $V$ for every $l \in F$, but $f$ does not admit a bounded holomorphic extension on $b_2$. If, on the other hand, $ \zeta \in \bd{\OO} \cap \overline{V}$, we choose a point $w \in b_2 \cap \overline{\OO}^\mathrm{c}$ and consider the function $\displaystyle f_w(z)=\frac{1}{z-w}, \: z \in \OO$. Similarly, to the previous argument $f^{(l)}$ is bounded on $V$ for every $l \in F$, but $f$ can not have a bounded and holomorphic extension on $b_2$, because the point $w$ is a pole.
\end{proof}

\noindent Another generalization is obtained if we replace the pair $(V,F)$ by a finite or infinite denumerable family of open subsets of $\OO$, $\displaystyle \{V_j\}_{j \in J}$ and assign to each $V_j$ a set $F_j \subset \{0,1,2,...\}$ demanding $f^{(l)}|_{V_j}$ be bounded for every $l \in F_j$. The space we obtain in this case is $\tomi_{j \in J} X(\OO,V_j,F_j)$ and its topology is induced by the seminorms $\norm{ f_{K_m}}_\infty, \: m=1,2,..., \: \norm{f^{(l)}}_{V_j}, \: l \in F_j, \: j \in J$. This space satisfies the requirements of Theorem 2.6., hence if we additionally assume that for all $j \in J$, $\overline{V_j} \cap \bd{\OO} \subset \tct$ we obtain that $\OO$ is a $\tomi_{j \in J} X(\OO,V_j,F_j)$-domain of holomorphy and the set of non-extendable functions is $G_\delta$ and dense in this space. The proof is similar to the one of Corollary \ref{5.1.} and is omitted.

\begin{remark}  \label{5.2.}
If $F_j=\{0\}$ for all $j \in J$, then the spaces $\bigcap_{j \in J} X(\OO,V_j)$ and $X(\OO, \bigcup_{j \in J} V_j)$ coincide if $J$ is finite, but might not be the same if $J$ is infinite. Generally, we have the inclusion $X(\OO, \bigcup_{j \in J} V_j) \subset \bigcap_{j \in J} X(\OO,V_j)$. We have already mentioned that both of those spaces satisfy the requirements of Theorem \ref{2.6.}. The sufficient conditions we provide for the conclusion of Theorem \ref{2.6.} to hold are equivalent to each other. Specifically, $\overline{ \enosi_{j \in J} V_j} \cap \bd{\OO} \subset \tct$ is equivalent to $\enosi_{j \in J} \overline{V_j} \cap \bd{\OO} \subset \tct$, because the intersection of $\tct$ with $\overline{\enosi_{j \in J} V_j}$ should be a closed set containing $\enosi_ {j \in J} \overline{V_j} \supset \enosi_{j \in J} V_j$. \\
\end{remark}

\noindent Let $\OO \subset \Co$ be a domain, $V \subset \OO$ a bounded open set and $p \in [1, \infty)$. Let $Y(\OO,V,p)=\{f \in H(\OO): \int_{V} |f|^{p} < \infty\}$.The topology of this space is the Fr\'{e}chet topology induced by the seminorms $\norm{f|_{K_m}}_{\infty}, \: m=1,2,...  \text{ and } \displaystyle \frac{1}{|V|}( \int_{V} |f|^{p} )^{\frac{1}{p}}, \: f \in Y(\OO,V,p)$, where $\{K_m\}_{m=1}^{\infty}$ is an exhaustive sequence of compact subsets of $\OO$. One can easily see that in this case $Y(\OO,V,p) \supset X(\OO,V)$; thus, if for every $\zeta \in \overline{V} \cap \bd{\OO}$ there exist points of $\overline{\OO}^\mathrm{c}$ arbitrarily close to $\zeta$, then $\OO$ is a $Y(\OO,V,p)$-domain of holomorphy. One can even consider the space $\displaystyle Z(\OO,V,p)=\{f \in H(\OO): \int_{V} |f|^{a} < \infty \text{  for all  } 1\leq a <p\}= \tomi_{1 \leq a < p} Y(\OO,V,p), \: (1<p)$, for whom the same results hold. The Fr\'{e}chet topology of $Z(\OO,V,p)$ is defined by the seminorms $\norm{f|_{K_m}}_{\infty}$, and $\displaystyle (\int_{V} |f|^{a_n})^{\frac{1}{a_n}}$, where $a_n$ is any strictly increasing sequence converging to $p$.\\

\noindent The last space we will discuss about will concern us further in the next section. Let $\OO \subset \Co$ be a domain and $ J \subsetneq \bd{\OO}$ a relatively open subset of its boundary. The set $A(\OO,J)$ contains all functions $f \in H(\OO)$, such that $f$ can be extended continuously on $\OO \cup J$. For $m=1,2,...$ we define the sets $\displaystyle \Delta_m=\{z \in \OO \cup J: \dist(z,\bd{\OO} \dif J) \geq \frac{1}{m}, \: |z| \leq m\}$. Then, the sets $\Delta_m$ are compact subsets of $\OO \cup J$ and every compact subset of $\OO \cup J$ is eventually contained in all of them. We equip $A(\OO,J)$ with the Fr\'{e}chet topology induced by the seminorms $\norm{f|_{\Delta_m}}_{\infty}, \: m=1,2,..., \: f \in A(\OO,J)$. The space $A(\OO,J)$ satisfies the requirements of Theorem \ref{2.6.} and if we additionally assume that every point in $J$ can be approximated by points in $\overline{\OO}^\mathrm{c}$, using similar arguments as before, we can prove that the set $\{f \in A(\OO,J): \: f \text{ is non-extendable}\}$ is $G_\delta$ and dense in $A(\OO,J)$. 
\par Furthermore, if $F \subset \{0,1,2,...\}$ we define $A(\OO,J,F)=\{f \in H(\OO): \: f^{(l)} \in A(\OO,J) \text{  for all  } l \in F\}$ and equip this set with the topology induced by the seminorms $\norm{f^{(l)}|_{\Delta_m}}, \: m=1,2,... \:, l \in F\cup\{0\}.$ Similar results hold for this space too. In particular, the results hold for the spaces $A^{p}(\OO,J), \: p\in \{0,1,2...\}\cup \{+\infty \}$. In the case $p < +\infty$ the space $A^{p}(\OO,J)$ corresponds to the set $F=\{0,1,2,...,p\}$. In the case $p=+\infty$ the set $F$ coincides with the set $\{0,1,2,...\}$. The reader can find the precise definition of the spaces $A^{p}(\OO,J)$ at the introduction of section 7, where we study those spaces elaborately.
\par Finally, we can combine any of the aforementioned spaces, considering functions which belong to some of them simultaneously. The topology in that case is the smallest topology which contains the topology of every space being considered. The resulting space satisfies the requirements of Theorem \ref{2.6.} and with the appropriate additional assumptions we can prove that the set of non-extendable functions is $G_\delta$ and dense.

\section{Nowhere differentiability in spaces $A(\boldmath {\Omega},J)$}

\subsection{The open unit disc}

Let $D=D(0,1)$ be the open unit disc and $J \subsetneq \bd{D}$ a relatively open subset of its boundary. Furthermore, we consider the sets $\displaystyle \Delta_m=\{z \in D \cup J: \: \dist(z,\bd{D} \dif J)\geq \frac{1}{m}\}$. As we have already mentioned in the previous section, every compact subset of $D \cup J$ is eventually contained in every $\Delta_m$. We equip the set $A(D,J)$ with the Fr\'{e}chet topology induced by the seminorms $\norm{f|_{\Delta_m}}_{\infty}, \: m=1,2,...$ and the set $A(D,J)$ becomes a complete metric space.\\
We note that a function defined on $J$ can be equivalently thought as a $2\pi$-periodic function defined on a suitable open set $J'$ of $\R$. Thus, by abuse of notation we will write $u(y)$ instead of $u(e^{iy}), \: y \in J'$ and refer to $J'$ simply as $J$.\\
Let $J_m=J \cap \Delta_m, \: m=1,2,...$. Then the sequence $\{J_m\}_{m=1}^{\infty}$ is an exhaustive sequence of compact subsets of $J$.\\
The result of this section is the following:

\begin{theorem}  \label{6.1.}
The set of functions $f \in A(D,J)$, such that $Ref|_{J},Imf|_{J}$ are not differentiable with respect to the parameter $\theta, \: \theta \in \R$, at any point $z=e^{i\theta} \in J$ contains a $G_\delta$ and dense set. \\
\end{theorem}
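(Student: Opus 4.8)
The plan is to realize the desired set as a countable intersection of open dense sets and invoke the Baire category theorem, which applies since $A(D,J)$ is a complete metric space. For $n,j,m \in \N$ define
\[
E_{n,j,m} = \Bigl\{ f \in A(D,J) : \text{for every } \theta \in J_j \text{ there exists } y \in J \text{ with } |y-\theta| < \tfrac{1}{m} \text{ and } \bigl|\mathrm{Re}f(y)-\mathrm{Re}f(\theta)\bigr| > n\,|y-\theta| \Bigr\},
\]
and analogously $\widetilde E_{n,j,m}$ with $\mathrm{Im}$ in place of $\mathrm{Re}$. If $f$ lies in $\bigcap_{n,j} \bigl( \bigcup_{m} E_{n,j,m} \bigr) \cap \bigcap_{n,j}\bigl(\bigcup_m \widetilde E_{n,j,m}\bigr)$, then for every $\theta \in J$ (which lies in some $J_j$) and every $n$ the difference quotient of $\mathrm{Re}f$ at $\theta$ exceeds $n$ arbitrarily close to $\theta$, so $\limsup$ of the absolute difference quotient is $+\infty$; in particular $\mathrm{Re}f$ is not differentiable at $\theta$, and likewise $\mathrm{Im}f$. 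So it suffices to show each $\bigcup_m E_{n,j,m}$ is open and dense (and the same for the $\mathrm{Im}$ version, by symmetry); openness of the individual $E_{n,j,m}$ is cleanest, but openness of the union is what we actually need, and I would prove the stronger statement that each $E_{n,j,m}$ with the strict inequalities is open: if $f \in E_{n,j,m}$, compactness of $J_j$ lets us extract, by a finite subcover argument, a uniform $\eta > 0$ margin in the strict inequalities, and any $g$ with $\norm{(g-f)|_{\Delta_M}}_\infty$ small (for $M$ large enough that $\Delta_M$ contains an $\tfrac1m$-neighborhood of $J_j$ inside $D \cup J$, using Lemma~\ref{2.10.} to stay in $\OO \cup J$) stays in $E_{n,j,m}$.

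The heart of the argument is density of $\bigcup_m E_{n,j,m}$. Fix $n,j$ and a target $f \in A(D,J)$ and $\e > 0$; I want $g \in A(D,J)$ with $\norm{(g-f)|_{\Delta_k}}_\infty < \e$ for a prescribed $k$ and $g \in E_{n,j,m}$ for some $m$. The idea is to add to $f$ a small, highly oscillatory holomorphic perturbation. A natural candidate is $g = f + \e' \, z^N \phi(z)$ where $\phi$ is chosen so that $z^N\phi$ is uniformly small on $\Delta_k$ but its real part has large difference quotients near every point of $J_j \subset \bd D$; for instance a Blaschke-type product or a lacunary polynomial $\sum_{l} c_l z^{N_l}$ with rapidly growing gaps, whose restriction to $\bd D$ is a continuous nowhere-differentiable function (a Weierstrass-type series $\sum a^{-l} z^{b^l}$ with $ab > 1$) rescaled to be small on $\Delta_k$. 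The point is that on $\Delta_k \subset D \cup J$ the modulus of $z$ is bounded, so multiplying by a high power $z^N$ makes the perturbation uniformly small there while leaving its boundary behavior on $J$ essentially that of the Weierstrass series; since the Weierstrass function has $\limsup$ difference quotients $+\infty$ at every point of $\bd D$, a compactness argument on $J_j$ produces a single $m$ that works, giving $g \in E_{n,j,m}$. One must check that $g \in A(D,J)$, i.e. that the perturbation extends continuously to $D \cup J$; a lacunary series convergent on $\overline D$ (absolutely convergent coefficients) does this with room to spare, so I would take the coefficients absolutely summable but the gaps large enough to force the nowhere-differentiability of the boundary trace, which is the classical Weierstrass/Hardy construction.

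The main obstacle I anticipate is the interplay between two competing demands on the perturbation: it must be uniformly tiny on the compact set $\Delta_k$ (hence roughly on a large part of $\overline D$, if $J$ is large), yet its real and imaginary parts must have unbounded difference quotients \emph{at every point of} $J_j$ simultaneously, with a \emph{uniform} gap so that a single $m$ suffices for compactness. Pushing oscillation into the boundary while keeping the sup-norm on $\Delta_k$ small is exactly what the factor $z^N$ buys — on $\Delta_k$ one has $|z| \le 1 - c_k$ for some $c_k>0$ away from $J$, but near $J_j$ the set $\Delta_k$ touches $\bd D$, so the decay $|z|^N$ is \emph{not} uniformly small there and one cannot simply kill the perturbation by a high power. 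The resolution is to separate the two roles: use $\Delta_k \subset \Delta_j' \cup (\text{a neighborhood of } J_j)$ and note that on $J$ itself we do not need $g$ close to $f$, only on $\Delta_k$; but $\Delta_k$ does contain part of $J$. So the honest fix is to choose the perturbation to be small in sup-norm on \emph{all} of $\overline D$ — i.e. an absolutely convergent lacunary series $\sum c_l z^{b^l}$ with $\sum|c_l| < \e$ — and rely on the fact (Hardy's theorem) that such a series can still be nowhere differentiable on $\bd D$ provided the gaps grow fast enough relative to the decay, so that even with total mass below $\e$ the difference quotients blow up. Verifying that this is compatible — small total coefficient mass together with genuinely unbounded (not merely large) difference quotients, uniformly over $J_j$ — is the technical crux, and it is here I would spend the most care, likely citing Hardy's refinement of the Weierstrass construction and a normal-families / equicontinuity argument to get the uniformity over the compact arc $J_j$.
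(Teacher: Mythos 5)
Your overall strategy (a countable intersection of open sets plus Baire, openness via compactness of $J_j$, density via an oscillatory perturbation) is in the spirit of the paper, but as written there are two genuine gaps. The first is the quantifier structure of your $G_\delta$ set: in $E_{n,j,m}$ you decouple the slope bound $n$ from the radius $\frac1m$ and then take $\bigcup_m$, so membership in $\bigcap_{n,j}\bigl(\bigcup_m E_{n,j,m}\bigr)$ only gives, for each $n$, a witness $y$ with quotient $>n$ at distance $<\frac{1}{m(n)}$ from $\theta$, with $m(n)$ uncontrolled; these witnesses need not approach $\theta$, so your assertion that ``the difference quotient exceeds $n$ arbitrarily close to $\theta$'' does not follow. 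This is not merely cosmetic, because elements of $A(D,J)$ may be unbounded near $\bd D\dif J$: take $J$ an arc of angular length $\frac12$ with an endpoint $\zeta_0=e^{i\pi/4}$ and $g(z)=\frac{1}{z-\zeta_0}$. Then $g\in A(D,J)$ and $\mathrm{Re}\,g|_{J}$, $\mathrm{Im}\,g|_{J}$ are real-analytic in the parameter, hence differentiable at every point of $J$; yet $g\in E_{n,j,1}$ (and in the analogous $\widetilde E_{n,j,1}$) for all $n,j$, since for every $\theta\in J_j$ one may take $y\in J$ close to $\zeta_0$, where $|\mathrm{Re}\,g(y)|$ and $|\mathrm{Im}\,g(y)|$ blow up while $|y-\theta|<1$. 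So your intersection is not contained in the set of nowhere differentiable functions. The cure is to couple the two parameters as in the paper's definitions (6.1)--(6.4): demand a witness $y\in(\theta-\frac1n,\theta+\frac1n)\cap J_m$ with quotient $>n$, and intersect over both indices (equivalently, replace your $\bigcup_m$ by $\bigcap_m$); requiring $y\in J_m$ also removes the wrinkle in your openness argument, since the finitely many witnesses then lie in a fixed $\Delta_M$ on which the seminorms control $g-f$.

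The second gap is in the density step. You perturb an arbitrary $f$ directly by a small oscillatory function; this cannot work as stated, because $f|_J$ is only continuous and may cancel the oscillation (for instance $f=-\e W+{}$a polynomial makes $f+\e W$ smooth on $J$), so $f+\e W$ need not land in your open sets. Moreover, the difficulty you single out --- making the perturbation simultaneously small in norm and nowhere differentiable via Hardy-type gap conditions --- is not a difficulty at all: if $W$ is nowhere differentiable, so is $\de W$ for every $\de\neq0$. What is actually missing is the reduction of $f$ to a smooth approximant before adding the oscillation, and this is exactly where the paper uses Mergelyan's theorem together with Lemma 2.8 (each $\Delta_m$ has connected complement in $\ext$) to show that polynomials are dense in $A(D,J)$; it then fixes one function $f_0\in S$ (a complexification of the Weierstrass function, from \cite{EskeMakr}), observes that $S$ is invariant under adding polynomials, and concludes that the translates $f_0+p$ (equivalently, in your normalization, $p+\de f_0$ with $\de$ small) form a dense subset of $S$; the imaginary parts are then handled by intersecting $S$ with $iS$ via Baire's theorem. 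With your argument repaired along these lines it coincides with the paper's proof.
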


\noindent First of all, we state some definitions and lemmata which are needed for the proof of Theorem 6.1.

\noindent For $m,n \in \N$ we define the following sets:

\begin{equation}  D_n=\{ u \in C(J): \text{  for every  } \theta \in J \text{  there exists  } y \in (\theta - \frac{1}{n},\theta + \frac{1}{n})\cap J \text{  such that  } |u(y)-u(\theta)|>n|y-\theta| \} 
\end{equation}

\begin{equation}
E_n=\{f \in A(D,J): \: Ref|_{J} \in D_n\}
\end{equation}

\begin{equation}
D_{n,m}=\{ u \in C(J): \text{  for every  } \theta \in J_m \text{  there exists  } y \in (\theta - \frac{1}{n},\theta + \frac{1}{n})\cap J_m \text{  such that  } |u(y)-u(\theta)|>n|y-\theta| \}
\end{equation}

\begin{equation}
E_{n,m}=\{f \in A(D,J): \: Ref|_{J} \in D_{n,m}\}
\end{equation}

\noindent It easy to check that the aforementioned sets are related in the following way:\\
$\tomi_{m=1}^{\infty} D_{n,m} = D_n$ and $\tomi_{m=1}^{\infty} E_{n,m} = E_n$

\begin{lemma}  \label{6.2.}
For every $m,n \in \N$ the set $E_{n,m}$ is open in $A(D,J)$.
\end{lemma}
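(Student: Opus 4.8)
The plan is to show that the complement $A(D,J) \setminus E_{n,m}$ is closed, by proving it is sequentially closed (since $A(D,J)$ is metrizable). So suppose $\fk$ is a sequence in $A(D,J) \setminus E_{n,m}$ converging to some $f \in A(D,J)$; I want to show $f \notin E_{n,m}$, i.e. that $\mathrm{Re}\, f|_J \notin D_{n,m}$. By definition, $f_k \notin E_{n,m}$ means $\mathrm{Re}\, f_k|_J \notin D_{n,m}$, which means there exists a point $\theta_k \in J_m$ such that for \emph{all} $y \in (\theta_k - \frac1n, \theta_k + \frac1n) \cap J_m$ we have $|\mathrm{Re}\, f_k(y) - \mathrm{Re}\, f_k(\theta_k)| \leq n|y - \theta_k|$.

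Next I would extract a convergent subsequence. Since $J_m$ is compact, the sequence $\thk$ has a subsequence converging to some $\theta_0 \in J_m$; relabel so that $\theta_k \to \theta_0$. The key analytic input is that convergence in $A(D,J)$ implies uniform convergence of $\mathrm{Re}\, f_k$ to $\mathrm{Re}\, f$ on the compact set $\Delta_m$, hence on $J_m = J \cap \Delta_m$; moreover these functions are (uniformly) continuous on $J_m$. The goal is then to pass to the limit in the inequality and conclude that $\theta_0$ witnesses $\mathrm{Re}\, f|_J \notin D_{n,m}$, i.e. that for every $y \in (\theta_0 - \frac1n, \theta_0 + \frac1n) \cap J_m$ we have $|\mathrm{Re}\, f(y) - \mathrm{Re}\, f(\theta_0)| \leq n|y - \theta_0|$.

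The main obstacle is the one genuinely delicate point: fixing $y \in (\theta_0 - \frac1n, \theta_0 + \frac1n) \cap J_m$, I need, for $k$ large, to have a point $y_k \in (\theta_k - \frac1n, \theta_k + \frac1n) \cap J_m$ with $y_k \to y$, so that I can apply the inequality for $f_k$ at $(\theta_k, y_k)$ and then let $k \to \infty$. Since the interval $(\theta_0 - \frac1n, \theta_0 + \frac1n)$ is open and $y$ lies strictly inside it, for $k$ large $y$ itself lies in $(\theta_k - \frac1n, \theta_k + \frac1n)$ (because $\theta_k \to \theta_0$), so one may simply take $y_k = y$; and $y \in J_m$ by hypothesis. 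This sidesteps any need to perturb $y$. Thus for large $k$ the inequality $|\mathrm{Re}\, f_k(y) - \mathrm{Re}\, f_k(\theta_k)| \leq n|y - \theta_k|$ holds. Now let $k \to \infty$: by uniform convergence $\mathrm{Re}\, f_k(y) \to \mathrm{Re}\, f(y)$, and $\mathrm{Re}\, f_k(\theta_k) \to \mathrm{Re}\, f(\theta_0)$ by the standard estimate $|\mathrm{Re}\, f_k(\theta_k) - \mathrm{Re}\, f(\theta_0)| \leq \|\mathrm{Re}\,(f_k - f)|_{J_m}\|_\infty + |\mathrm{Re}\, f(\theta_k) - \mathrm{Re}\, f(\theta_0)|$ together with continuity of $\mathrm{Re}\, f$, while $|y - \theta_k| \to |y - \theta_0|$. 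Hence $|\mathrm{Re}\, f(y) - \mathrm{Re}\, f(\theta_0)| \leq n|y - \theta_0|$.

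Since $y$ was an arbitrary point of $(\theta_0 - \frac1n, \theta_0 + \frac1n) \cap J_m$, the point $\theta_0 \in J_m$ violates the defining condition of $D_{n,m}$, so $\mathrm{Re}\, f|_J \notin D_{n,m}$ and therefore $f \notin E_{n,m}$. This shows $A(D,J) \setminus E_{n,m}$ is sequentially closed, hence closed, and so $E_{n,m}$ is open in $A(D,J)$. I expect the whole argument to be short; the only thing to be careful about is the bookkeeping in the subsequence extraction and making explicit that one can take $y_k = y$ for large $k$, which is where a careless write-up might stumble.
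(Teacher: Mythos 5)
Your proof is correct: the witnesses $\theta_k\in J_m$ exist by the negation of the defining condition of $D_{n,m}$, compactness of $J_m$ gives a limit point $\theta_0$, the strictness of $|y-\theta_0|<\frac 1n$ lets you take $y_k=y$ for large $k$, and uniform convergence on $\Delta_m\supset J_m$ plus continuity of $\mathrm{Re}\,f$ passes the inequality to the limit, so the complement is sequentially closed. The paper itself omits the proof, referring to Lemma 2.2 of \cite{Eske}, and your argument is essentially that same standard compactness argument, so no further comparison is needed.
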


\begin{proof}
The proof is similar to the proof of Lemma 2.2. in \cite{Eske} and is omitted. 
\end{proof}

\begin{corollary}  \label{6.3.}
The set $S=\tomi_{m,n=1}^{\infty} E_{n,m}=\tomi_{n=1}^{\infty} E_{n}$ is a $G_\delta$ subset of $A(D,J)$.
\end{corollary}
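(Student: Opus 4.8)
The plan is to invoke Lemma \ref{6.2.} together with the elementary observation, already recorded just before that lemma, that $\tomi_{m=1}^{\infty} E_{n,m} = E_n$ for each fixed $n \in \N$. Since the definition of a $G_\delta$ set is exactly a countable intersection of open sets, and the index set $\N \times \N$ is countable, the only thing to check is that $S$ is genuinely such an intersection and that the two displayed forms of $S$ agree.

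Concretely, I would proceed as follows. First, by Lemma \ref{6.2.}, every set $E_{n,m}$, $m,n \in \N$, is open in $A(D,J)$. Second, re-index: $S = \tomi_{m,n=1}^{\infty} E_{n,m} = \tomi_{n=1}^{\infty}\big(\tomi_{m=1}^{\infty} E_{n,m}\big) = \tomi_{n=1}^{\infty} E_n$, where the last equality is the relation $\tomi_{m=1}^{\infty} E_{n,m} = E_n$ stated above. Thus $S$ is exhibited as a countable intersection of the open sets $E_{n,m}$ (the family $\{E_{n,m} : (n,m) \in \N\times\N\}$ being countable), which is precisely the definition of a $G_\delta$ subset of $A(D,J)$.

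There is no real obstacle here; the corollary is a direct packaging of Lemma \ref{6.2.}. The only minor points to state carefully are that a finite or countable intersection of $G_\delta$ sets (here, of open sets) is again $G_\delta$, and that the double-indexed intersection can be rewritten as an iterated one so that the identification with $\tomi_{n=1}^{\infty} E_n$ is transparent. Density of $S$ is not claimed at this stage and will be handled separately in the sequel, so the proof of Corollary \ref{6.3.} stops here.
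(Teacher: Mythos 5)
Your proposal is correct and matches the paper's argument, which simply cites Lemma \ref{6.2.}: each $E_{n,m}$ is open, the identity $\tomi_{m=1}^{\infty} E_{n,m}=E_n$ is recorded before that lemma, and a countable (double-indexed) intersection of open sets is $G_\delta$ by definition. Your write-up just makes the re-indexing explicit.
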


\begin{proof}
The proof follows directly from Lemma 6.2.
\end{proof}

\begin{lemma}  \label{6.4.}
The set $S$ is a dense subset of $A(D,J)$.
\end{lemma}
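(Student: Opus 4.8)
The plan is to prove density of $S$ by showing that $S$ contains a dense subset, and the natural candidate is to perturb an arbitrary $f \in A(D,J)$ by a suitable "highly oscillating" holomorphic function with small sup-norm on each $\Delta_m$. Concretely, given $f \in A(D,J)$, a basic neighborhood of $f$ is determined by finitely many seminorms $\|\cdot|_{\Delta_m}\|_\infty$ and an $\varepsilon>0$; since the topology is Fréchet it suffices to approximate $f$ uniformly on a fixed $\Delta_M$ (all smaller seminorms are dominated). So I would fix $M$ and $\varepsilon$, and seek $g \in S$ with $\|(f-g)|_{\Delta_M}\|_\infty < \varepsilon$. Because polynomials are dense in $A(D,J)$ (as noted in the introduction; this follows from Mergelyan-type approximation since $\Delta_M$ has connected complement in $\widehat{\Co}$ by Lemma 2.8, or more elementarily since $f$ extends continuously to $D\cup J$ and one approximates on slightly larger Jordan domains), I may first replace $f$ by a polynomial $P$, and then it is enough to add to $P$ a holomorphic function $h$ on $D$, continuous on $\ch D$, with $\|h\|_{\infty,\ch D}$ small, such that $\mathrm{Re}(P+h)|_J \in D_n$ for all $n$, i.e. $\mathrm{Re}(P+h)$ has the prescribed "infinite difference quotient" oscillation at every point of $J$.

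The key step is the explicit construction of such an $h$. The standard device (as in Eskenazis' work, reference \cite{Eske}) is a lacunary-type series: take $h(z) = \sum_{k=1}^{\infty} c_k z^{N_k}$ with $c_k \to 0$ fast enough that $\sum |c_k| < \infty$ (so $h \in A(D)$ and $\|h\|_\infty$ is as small as we like by truncating/scaling), but with the exponents $N_k$ growing fast enough (Hadamard gaps, $N_{k+1}/N_k \to \infty$) and $c_k$ not summable too fast, so that $\mathrm{Re}\, h(e^{i\theta}) = \sum c_k \cos(N_k\theta)$ (assuming real $c_k$) fails to be differentiable at every $\theta$, and moreover fails in the strong quantitative sense required by $D_n$: for every $\theta$ and every $n$ there is $y$ near $\theta$ with $|\mathrm{Re}\,h(y)-\mathrm{Re}\,h(\theta)| > n|y-\theta|$. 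The classical Weierstrass/Hardy estimates give exactly this: choosing $y = \theta + \pi/N_k$ for large $k$, the $k$-th term contributes $\asymp |c_k|$ while the tail and the low-order terms are controlled, so the difference quotient is $\asymp |c_k| N_k \to \infty$ provided $|c_k| N_k \to \infty$. One then picks $c_k = \varepsilon \, 2^{-k}$ and $N_k$ with $N_k 2^{-k} \to \infty$ and $N_{k+1} \geq k N_k$ (say $N_k = 4^k k!$); scaling by $\varepsilon$ makes $\|h\|_\infty \leq \varepsilon \sum 2^{-k} = \varepsilon$. The polynomial $P$ added on top only shifts the difference quotient by a bounded amount ($P$ being $C^1$ on $\ch D$, its difference quotients are uniformly bounded), so it does not destroy the blow-up. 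Hence $\mathrm{Re}(P+h)|_J \in \bigcap_n D_n$, i.e. $P + h \in \bigcap_n E_n = S$, and it lies in the prescribed neighborhood of $f$.

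Finally I would assemble these pieces: given $f$ and a basic neighborhood $U$, produce $P$ polynomial with $\|(f-P)|_{\Delta_M}\|_\infty < \varepsilon/2$, then produce $h \in A(D)$ with $\|h\|_{\infty,\ch D} < \varepsilon/2$ as above, and set $g = P + h \in S \cap U$. This shows $S$ is dense. The main obstacle is the quantitative oscillation estimate: it is not enough that $\mathrm{Re}\,g|_J$ be nowhere differentiable; we need the uniform-in-$\theta$, for-all-$n$ lower bound on difference quotients that defines $D_n$, which forces careful bookkeeping of the lacunary parameters (the interplay between decay of $c_k$, growth of $N_k$, the choice of the test point $y$ near $\theta$, and the fact that the bad point $y$ must lie in $J$ — here we use that $J$ is open, so for $\theta \in J$ and $k$ large $\theta + \pi/N_k \in J$ automatically). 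A secondary, more routine point is justifying that polynomials are dense in $A(D,J)$ in its Fréchet topology, which reduces to uniform approximation on the compacta $\Delta_m$ and can be cited from \cite{GeorgMastrNest} or deduced from Mergelyan's theorem using Lemma 2.8.
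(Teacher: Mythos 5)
Your argument is correct in outline, but it takes a genuinely different (and more laborious) route than the paper. The paper does not construct a small oscillating perturbation at all: it takes a single fixed $f_0\in S$ (a complexification of the Weierstrass function, whose existence it simply cites from \cite{EskeMakr}), observes that $f_0+p\in S$ for every polynomial $p$ (exactly your remark that a $C^1$ summand only shifts difference quotients by a bounded amount), and then uses density of the polynomials in $A(D,J)$ (Mergelyan plus Lemma 2.8, as you also note) to conclude that the translate $\{f_0+p\}$ of a dense set is dense and contained in $S$. The translation trick is the key economy: since adding the \emph{fixed} function $f_0$ is a homeomorphism of $A(D,J)$, there is no need for the perturbation to have small norm, so all the quantitative lacunary bookkeeping you undertake disappears. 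That bookkeeping is also where your sketch is thinnest: with $c_k=\varepsilon 2^{-k}$ and $y=\theta+\pi/N_k$ the $k$-th term need not contribute $\asymp|c_k|$ (the factor $|\cos(N_k\theta)|$ can be small), and the tail $\sum_{j>k}|c_j|=\varepsilon 2^{-k}$ is of the same order as $c_k$, so the naive ``main term dominates tail'' estimate fails as stated; one needs the classical Hardy-type choice of the test point and either faster relative decay of $c_k$ or sign control of the tail, exactly as in \cite{Eske}, \cite{EskeMakr}. What your approach buys is self-containedness and an explicit small-norm witness in every neighborhood; what the paper's approach buys is a two-line proof that outsources the hard analysis to the cited construction.
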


\begin{proof}
We know that $S \neq \emptyset$ because it contains a complexification of the Weierstrass function \cite{EskeMakr}.
If $f_0 \in S$ and $p$ is a polynomial, then one can easily see that $f_0+p \in S$. Furthermore, it is true that the polynomials are dense in $A(D,J)$. Indeed, if $g \in A(D,J)$, then by Mergelyan's theorem ([Ru]), taking into account Lemma \ref{2.8.}, for every $m \in \N$ we can find a polynomial $p_m$ such that $\norm{(g-p_m)|_{\Delta_m}}_{\infty}<\frac{1}{m}$. The sequence $\{p_m\}_{m=1}^{\infty}$ converges to $g$ in the topology of $A(D,J)$. These two observations imply that the set of translations $\{f_0 +p: \: p \text{ polynomial}\} \subset S$ and is dense in $A(D,J)$.

\end{proof}

\noindent We proceed now to the proof of Theorem \ref{6.1.}

\begin{proof}
Lemmata \ref{6.2.} and \ref{6.4.} imply that the set $S$ is $G_\delta$ and dense subset of $A(D,J)$. Since multiplication by $i$ is an automorphism of $A(D,J)$, we conclude that the set $iS$ is also a $G_\delta$ and dense subset of $A(D,J)$. Hence, the set $R=S \cap iS$ is $G_\delta$ and dense by application of Baire's category theorem and consists of functions which are not differentiable with respect to the parameter $\theta, \: \theta \in \R$, at any point $z=e^{i\theta} \in J$. 
\end{proof}

\subsection{Jordan domains}

\noindent We consider now the case where $\OO$ is a Jordan domain and $J \subsetneq \bd{\OO}$ a relatively open subset of its boundary. The space $A(\OO,J)$ consists of all functions $f \in H(\OO)$, such that $f$ can be continuously extended on $\OO \cup J$. The topology in this space is induced by the seminorms $\norm{f|_{\Delta_m}}_{\infty}, \: m=1,2,...$ where $\Delta_m=\{z \in \OO \cup J: \dist(z, \bd{\OO} \dif J)\geq \frac{1}{m}\}$. We also define the set $A_{0}(\ext \dif \overline{\OO},J)$ to be the set of all functions $f \in H(\Co \dif \overline{\OO})$ such that $f$ vanishes to infinity and can be extended continuously on $(\Co \dif \overline{\OO}) \cup J$.The Fr\'{e}chet topology of this space is induced by the seminorms $\norm{f|_{\widetilde{{\Delta_m}}}}_{\infty},\: m=1,2,...$ where $\widetilde{{\Delta_m}}= \Delta_m \cap \overline{D(0,m)}$. In this section nowhere differentiability is meant with respect to the parametrization of $J$, which is induced by any Riemann map $\phi$. The extension of the Riemann map is guaranteed by the Osgood - Caratheodory theorem. Before we state the results of this section we recall the Definition  2.9. which states that if $L \subset \bd{D}$ is a relatively open subset of the unit circle, we say that a continuous function $f \in C(L)$ belongs to $Z_L$, if for every $\theta \in L$ we have that $\displaystyle \limsup_{y \lra \theta} \left | \frac{Ref(y)-Ref(\theta)}{y-\theta} \right | = +\infty$ and $\displaystyle \limsup_{y \lra \theta} \left | \frac{Imf(y)-Imf(\theta)}{y-\theta} \right | = +\infty$. 

\begin{proposition}  \label{6.5.}
Let $\OO$ be a Jordan domain and $J \subsetneq \bd{\OO}$ a relatively open subset of its boundary. Consider also  $\phi: \overline{D} \lra \overline{\OO}$, a Riemann map and the open set $L=\phi^{-1}(J) \subset \bd{D}$. The set of functions $f \in A(\OO,J) $ such that $(f \circ \phi)|_{L} \in Z_{L}$ is a $G_\delta$ dense subset of $A(\OO,J)$.
\end{proposition}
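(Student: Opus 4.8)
The plan is to transport the result of Theorem \ref{6.1.} from the unit disc to the Jordan domain $\OO$ via the Riemann map $\phi$. First I would observe that $\phi$ induces a topological isomorphism between $A(\OO,J)$ and $A(D,L)$: if $f \in A(\OO,J)$, then $f \circ \phi \in H(D)$ extends continuously to $D \cup L$ by the Osgood--Carath\'{e}odory theorem (the extension of $\phi$ to $\overline{D}$ is a homeomorphism onto $\overline{\OO}$, carrying $L$ homeomorphically onto $J$), and conversely composition with $\phi^{-1}$ sends $A(D,L)$ back to $A(\OO,J)$. One should check that this bijection $f \mapsto f \circ \phi$ is continuous in both directions; this amounts to comparing the exhausting families $\{\Delta_m\}$ of $\OO \cup J$ with the exhausting families of $D \cup L$. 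Since $\phi$ and $\phi^{-1}$ are uniformly continuous on the relevant compacta and each $\phi$-image (resp.\ preimage) of a $\Delta_m$ is a compact subset of the other side's $\OO \cup J$ (resp.\ $D \cup L$), it is eventually contained in some $\Delta_{m'}$, which gives the needed two-sided estimates on the seminorms. Hence $f \mapsto f\circ\phi$ is a homeomorphism $A(\OO,J) \to A(D,L)$.

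Next I would identify the target set. A function $f \in A(\OO,J)$ satisfies $(f\circ\phi)|_L \in Z_L$ precisely when $f \circ \phi$, viewed as a $2\pi$-periodic continuous function on the open subset $L' \subset \R$ corresponding to $L$, has $\limsup_{y\to\theta}|{\operatorname{Re}}(f\circ\phi)(y) - {\operatorname{Re}}(f\circ\phi)(\theta)|/|y-\theta| = +\infty$ and the same with $\operatorname{Im}$, for every $\theta \in L'$. This is exactly the membership condition describing the set $R = S \cap iS$ built in the proof of Theorem \ref{6.1.} (with $J$ there replaced by $L$): indeed $R$ consists precisely of those $g \in A(D,L)$ for which both $\operatorname{Re} g|_L$ and $\operatorname{Im} g|_L$ are nowhere differentiable with respect to the parameter, and nowhere differentiability here is witnessed by the $\limsup = +\infty$ condition because $g \in S$ means $\operatorname{Re} g|_L \in \tomi_n D_n$, i.e.\ for every $\theta$ and every $n$ there is $y$ arbitrarily close to $\theta$ with $|u(y)-u(\theta)| > n|y-\theta|$, forcing the $\limsup$ to be infinite. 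So $R = \{ g \in A(D,L) : g|_L \in Z_L \}$.

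By Theorem \ref{6.1.} (applied with the disc $D$ and the relatively open set $L$ in place of $J$), $R$ is a $G_\delta$ and dense subset of $A(D,L)$. Since $f \mapsto f\circ\phi$ is a homeomorphism of $A(\OO,J)$ onto $A(D,L)$, its preimage $\{ f \in A(\OO,J) : (f\circ\phi)|_L \in Z_L \}$ is a $G_\delta$ and dense subset of $A(\OO,J)$, which is the assertion. The only genuinely non-routine point is the verification that composition with $\phi$ is bicontinuous for these specific Fr\'{e}chet topologies; the main subtlety there is that $J$ and $L$ are only relatively open (not all of the boundary), so one must be careful that a compact subset of $\OO \cup J$ stays away from $\bd{\OO}\dif J$ and that $\phi^{-1}$ maps it to a compact subset of $D\cup L$ away from $\bd{D}\dif L$ — this follows from the homeomorphism property of the extended Riemann map, which maps the compact set $\bd{\OO}\dif J$ onto $\bd{D}\dif L$. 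Everything else is a direct transport of Theorem \ref{6.1.}.
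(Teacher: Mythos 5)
Your proof is correct and is essentially the argument the paper intends: the paper omits the proof of Proposition \ref{6.5.}, referring to \cite{MastrPan}, whose method is exactly this conjugation by the extended Riemann map, here combined with the disc case already established as Theorem \ref{6.1.} (whose proof exhibits $R=S\cap iS=\{g\in A(D,L):\ g|_{L}\in Z_{L}\}$ as $G_\delta$ and dense, which is what you correctly use rather than the weaker literal wording ``contains a $G_\delta$ dense set''). The only additional point, the bicontinuity of $f\mapsto f\circ\phi$ between the Fr\'echet spaces $A(\OO,J)$ and $A(D,L)$, you justify correctly via the compact exhaustions, since the homeomorphism $\phi:\overline{D}\to\overline{\OO}$ carries $\bd{D}\dif L$ onto $\bd{\OO}\dif J$ and hence compact subsets of $D\cup L$ onto compact subsets of $\OO\cup J$ and conversely.
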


\begin{proposition}  \label{6.6.}
Let $\OO$ be a Jordan domain and $J \subsetneq \bd{\OO}$ a relatively open subset of its boundary. If $\phi: D \lra \ext \dif \OO$ is a Riemann map and $L=\phi^{-1}(J)$, the set of functions $f \in A_{0}(\ext \dif \overline{\OO},J)$, such that $(f \circ \phi)|_{L} \in Z_{L}$ is a dense and $G_\delta$ subset of $A_{0}(\ext \dif \overline{\OO},J)$.
\end{proposition}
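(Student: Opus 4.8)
The plan is to transfer the whole problem to the unit disc by a Riemann map, just as in Proposition~\ref{6.5.} and Theorem~\ref{6.1.}; the only new ingredient, the normalization $f(\infty)=0$, will be absorbed by passing to a closed subspace of codimension one. Fix the Riemann map $\phi:D\ra\ext\dif\overline{\OO}$, which by the Osgood--Caratheodory theorem extends to a homeomorphism of $\ch D$ onto $\ext\dif\OO$ (closures taken in $\ext$), so that $L=\phi^{-1}(J)$ is a relatively open subset of $\bd D$ and $a:=\phi^{-1}(\infty)$ is a point of $D$. First I would check that $f\longmapsto f\circ\phi$ is a topological isomorphism of $A_{0}(\ext\dif\overline{\OO},J)$ onto the closed subspace $W:=\{g\in A(D,L):\,g(a)=0\}$ of the Fr\'{e}chet space $A(D,L)$: the requirement that $f$ be holomorphic on $\ext\dif\overline{\OO}$ and vanish at $\infty$ is precisely the requirement that $g=f\circ\phi$ be holomorphic across $a$ with $g(a)=0$, and a routine argument --- using the maximum principle to control $g$ near $a$ by its values on small circles about $a$ --- shows that $\phi$ identifies the topology of $A_{0}(\ext\dif\overline{\OO},J)$ with the subspace topology induced on $W$ by $A(D,L)$. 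Under this isomorphism the set in the statement corresponds to $\{g\in W:\,g|_L\in Z_L\}$, so it suffices to prove that this set is dense and $G_\delta$ in $W$.

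For the $G_\delta$ part I would transcribe the disc case with $L$ in place of $J$. Fixing an exhausting sequence $\{L_m\}$ of compact subsets of $L$, I form the obvious analogues $D_{n,m},D_n\subset C(L)$ and $E_{n,m},E_n\subset W$ of the sets used around Theorem~\ref{6.1.}, with $J,J_m$ replaced by $L,L_m$. By the argument of Lemma~\ref{6.2.} each $E_{n,m}$ is open in $W$, so $S:=\tomi_{n}E_n=\tomi_{m,n}E_{n,m}$ is a $G_\delta$ subset of $W$; since $g\mapsto ig$ is an automorphism of $W$, the set $R:=S\cap iS$ is again $G_\delta$ in $W$, and, exactly as in the proof of Theorem~\ref{6.1.}, one has $R=\{g\in W:\,g|_L\in Z_L\}$.

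The remaining --- and genuinely new --- point is the density of $R$ in $W$, where the constraint $g(a)=0$ must be respected. I would split it into: (i) the polynomials vanishing at $a$ are dense in $W$; and (ii) $R$ contains every translate of a single fixed function by such a polynomial. For (i): given $g\in W$, Mergelyan's theorem applies on each $\Delta_m$ because $\Co\dif\Delta_m$ is connected --- this is Lemma~\ref{2.8.} with $\OO=D$ and $J=L$, exactly as in the proof of Lemma~\ref{6.4.} --- so there are polynomials $p_m\ra g$ in $A(D,L)$, and then $p_m-p_m(a)\ra g$ while each $p_m-p_m(a)$ is a polynomial vanishing at $a$ and hence lies in $W$. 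For (ii): I would take a complexification $g_0$ of a Weierstrass-type function with $g_0\in A(D,L)$ and $g_0|_L\in Z_L$ (cf.\ \cite{EskeMakr}) and set $g_0^{*}:=g_0-g_0(a)\in W$; subtracting a constant does not affect membership in $Z_L$. For any polynomial $q$ with $q(a)=0$, the function $g_0^{*}+q$ lies in $W$, and $(g_0^{*}+q)|_L$ remains in $Z_L$ because $q|_L$ is $C^{1}$, so adding it changes each difference quotient $\dfrac{\mathrm{Re}\,u(y)-\mathrm{Re}\,u(\theta)}{y-\theta}$ and $\dfrac{\mathrm{Im}\,u(y)-\mathrm{Im}\,u(\theta)}{y-\theta}$ by a term that converges as $y\ra\theta$, leaving both limits superior equal to $+\infty$. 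Thus $\{g_0^{*}+q:\,q\text{ a polynomial},\,q(a)=0\}\subset\{g\in W:\,g|_L\in Z_L\}$, and this set is dense in $W$ by (i). Combining, $R$ is a dense $G_\delta$ subset of $W$, and pulling everything back through $f\mapsto f\circ\phi$ yields the proposition. I expect the main obstacle to be the bookkeeping of the first paragraph --- checking that $A_{0}(\ext\dif\overline{\OO},J)$ is indeed isomorphic to the codimension-one subspace $W$, and that the density argument of Lemma~\ref{6.4.} still works after restricting to $W$ (which is why step (i) is isolated) --- while the rest is a verbatim adaptation of the disc case.
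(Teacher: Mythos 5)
Your proof is correct and follows essentially the route the paper intends: the paper omits the argument for Proposition \ref{6.6.}, deferring to Theorems 3.1--3.2 of \cite{MastrPan}, and what you write is the natural transfer of Theorem \ref{6.1.} (Lemmata \ref{6.2.}--\ref{6.4.}) through the Riemann map, with the normalization $f(\infty)=0$ correctly absorbed into the closed hyperplane $W=\{g\in A(D,L): g(a)=0\}$ (maximum principle for the identification of the topologies, subtraction of $p_m(a)$ and of $g_0(a)$ to recover density inside $W$). One small point, at the same level of rigour as the paper's own Section 6: with $y$ restricted to $L_m$ in the definition of $E_{n,m}$ you only obtain the inclusion $R\subseteq\{g\in W: g|_{L}\in Z_L\}$ rather than the asserted equality, so to conclude that the set in the statement is itself $G_\delta$, and that the translates $g_0^{*}+q$ actually lie in $R$, you should either let $y$ range over all of $L$ in $E_{n,m}$ (the openness proof survives the same compactness argument, since the finitely many witnesses $y_i$ lie in some compact $L_{m'}$) or invoke the one-sided blow-up of the Weierstrass complexification on compact subarcs, exactly as the paper does implicitly in Lemmata \ref{6.4.} and \ref{6.7.}.
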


\noindent The proofs of the Propositions 6.5. and 6.6. are similar to the proofs of Theorem 3.1., 3.2. in \cite{MastrPan} and thus, omitted.

\subsection{Domains bounded by a finite number of disjoint Jordan curves}
\noindent Let $\OO$ be a bounded domain whose boundary consists of a finite number of disjoint Jordan curves. If $V_0,V_1,...,V_{n-1}$ are the connected components of $\ext \dif \OO$, $\infty \in V_0$ and $\OO_0=\ext \dif V_0, \: \OO_1=\ext \dif V_1,...,\: \OO_{n-1}=\ext \dif V_{n-1}$, there exist Riemann maps $\phi_i:\overline{D} \lra \overline{\OO}_i,\:  i \in I=\{0,1,...,n-1\}$. Additionally, we consider $J_i \subset \bd{\OO_i}, \: i\in I$, relatively open and $\phi_i^{-1}(J_i)=L_i \subset \bd{D}$, relatively open subsets of the unit circle, such that there exist at least an $i\in I$ for whom $J_i$ is distinct from $\bd{\OO_i}$. Let $J=J_0 \cup J_1\cup...\cup J_{n-1}$. We define $A(\OO,J)=\{f \in H(\OO): f \text{ can be extended continuously on } J_i, i \in I\}$. Consider also the sets $\displaystyle {\Delta}_{m}^{(i)}=\{z \in \OO_i \cup J_i\ : \dist(z,\bd{\OO_i} \dif J_i) \geq \frac{1}{m}\}$ for $i \in I$ and $m \in \N$. Here, we use the convention, that if $\bd{\OO_i} \dif J_i$ is an empty set, then  ${\Delta}_{m}^{(i)}=\overline{\OO_i} \text{ for every } m \in \N$. Lemma \ref{2.8.} implies that for fixed $i \in I$ the sets ${\Delta}_{m}^{(i)}$ have connected complement in the Riemann sphere. Let $\Delta_m = \tomi_{i \in I} {\Delta}_{m}^{(i)}, \; m \in \N$. The sequence $\{\Delta_m\}_{m=1}^{\infty}$ consists of compact subsets of $\OO \cup J$, such that every compact set of $\OO \cup J$ is eventually contained to every $\Delta_m$. Moreover, $\ext \dif \Delta_m = \enosi_{i=0}^{n-1} \ext \dif {\Delta}_{m}^{(i)}$. For every $i \in I$, the set $\ext \dif {\Delta}_{m}^{(i)}$ is open, connected, therefore $\ext \dif \Delta_m$ has at most $n$ connected components. If $m$ is sufficiently large, the number of components is exactly $n$, each of whom contains a connected component of $\ext \dif (\OO \cup J)$. Finally, we set ${L}_{m}^{(i)}=\phi_{i}^{-1}({\Delta}_{m}^{(i)} \cap J_i)$. For fixed $i \in I$, the sets ${L}_{m}^{(i)}$ form a sequence of compact subsets of $L_i$, such that every compact subset of $L_i$ is eventually contained in all of them. We equip the set $A(\OO,J)$ with the Fr\'{e}chet topology induced by the seminorms $\norm{f|_{\Delta_m}}_{\infty}, \:m=1,2,..., \: f \in A(\OO,J)$.
\par We are interested in functions $f \in A(\OO,J)$ for whom $(f \circ \phi_i)|_{L_i} \in Z_{L_i}$ for every $i \in I$. For $k \in \N$ we set $D^{(i)}_k=\{ u \in C(L_i): \text{ for all }\theta \in L_i \text{ there exists } y \in (\theta-\frac{1}{k},\theta+\frac{1}{k})\cap L_i : \: |u(y)-u(\theta)|>k|y-\theta| \}$ and $E^{(i)}_k=\{ f \in A(\OO,J): Re(f \circ \phi_i)|_{L_i} \in D^{(i)}_k\}$. Consider also the sets $D^{(i)}_{k,m}=\{ u \in C(L_i): \text{ for all }\theta \in {L_m}^{(i)} \text{ there exists } y \in (\theta-\frac{1}{k},\theta+\frac{1}{k})\cap {L}_{m}^{(i)} : \: |u(y)-u(\theta)|>k|y-\theta| \}$ and $E^{(i)}_{k,m}=\{ f \in A(\OO,J): Re(f \circ \phi_i)|_{L_i} \in D^{(i)}_{k,m}\}$. Similarly to what we have done in the previous cases, we can show that $\tomi_{m=1}^{\infty} D^{(i)}_{k,m}=D^{(i)}_k$ and $\tomi_{m=1}^{\infty} E^{(i)}_{k,m}=E^{(i)}_k$. Furthermore, for every $k,m \in \N$ and $i \in I$, the set $E^{(i)}_{k,m}$ is open in $A(\OO,J)$ . The proof of the last statement is similar to that of Lemma \ref{6.2.} and thus omitted.

\begin{lemma}  \label{6.7.}
The set $S=\tomi_{i=0}^{n-1} \tomi_{k,m=1}^{\infty} E^{(i)}_{k,m}$ is dense in $A(\OO,J)$.
\end{lemma}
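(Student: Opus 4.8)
The plan is to mimic the density argument of Lemma \ref{6.4.}, adapted to the multiply connected setting. First I would observe that it suffices to produce a single element $f_0$ of $S$ together with a dense subspace of $A(\OO,J)$ all of whose translates by elements of that subspace remain in $S$; the natural candidate for the dense subspace is the space of polynomials in $z$. Concretely, I would fix for each $i \in I$ a function $g_i \in Z_{L_i}$ (a complexification of a Weierstrass-type function, as in \cite{EskeMakr}) and, using Proposition \ref{6.5.} or the Osgood--Caratheodory extension of $\phi_i$, realize $g_i$ as the boundary trace $(h_i \circ \phi_i)|_{L_i}$ of a function $h_i$ holomorphic on $\OO_i$ and continuous on $\OO_i \cup J_i$. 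The first substantive point is then to assemble these into a single $f_0 \in A(\OO,J)$ with $(f_0 \circ \phi_i)|_{L_i} \in Z_{L_i}$ for every $i$ simultaneously.

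For that assembly step I would invoke a Mergelyan/Runge-type approximation on the compact sets $\Delta_m$. By the discussion preceding the lemma, each $\ext \dif \Delta_m$ has finitely many components, one around each ``hole'' $V_i$, so $\Delta_m$ is a compact set with the right topological shape for rational approximation with prescribed poles in the holes. The idea is to approximate, uniformly on $\Delta_m$, a function that looks like $h_i$ near $J_i$ for each $i$; a clean way is to write the desired model function as a sum $\sum_i H_i$ where $H_i$ is holomorphic on a neighborhood of $\overline{\OO}_i$ except near $J_i$ and negligible near the other $J_j$, and then approximate each summand by rational functions with poles in $V_i$, hence by functions holomorphic on $\OO$. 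The limit $f_0$ will lie in $A(\OO,J)$ and, because uniform convergence on $\Delta_m \supset {\Delta}^{(i)}_m \cap J_i$ passes to the boundary parametrization via $\phi_i$, the ``$\limsup = +\infty$'' conditions defining $Z_{L_i}$ — being preserved under addition of a Lipschitz perturbation — survive, so $(f_0 \circ \phi_i)|_{L_i} \in Z_{L_i}$ for all $i$. (One must check that the perturbations introduced are uniformly Lipschitz near each $J_i$; this is where the convention ${\Delta}^{(i)}_m = \overline{\OO}_i$ when $\bd{\OO}_i \dif J_i = \emptyset$ needs a small separate remark.)

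Next I would show the polynomials are dense in $A(\OO,J)$: given $g \in A(\OO,J)$ and $m \in \N$, since $\ext \dif \Delta_m$ is connected when $n=1$ and in general has one component per hole with $\infty$ in one of them, Mergelyan's theorem combined with Runge's theorem (poles pushed to $\infty$) yields a polynomial $p_m$ with $\norm{(g - p_m)|_{\Delta_m}}_\infty < \frac{1}{m}$; the $p_m$ converge to $g$ in $A(\OO,J)$. Finally, for a polynomial $p$, each $(p \circ \phi_i)|_{L_i}$ is Lipschitz (indeed real-analytic) on compact subarcs, so adding it to $f_0$ does not destroy membership in any $Z_{L_i}$, whence $f_0 + p \in S$; thus $\{f_0 + p : p \text{ polynomial}\}$ is a dense subset of $A(\OO,J)$ contained in $S$, and $S$ is dense.

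The main obstacle is the simultaneous-realization step: producing one holomorphic function on the multiply connected $\OO$ whose boundary behavior on \emph{each} of the curves $J_i$ is separately wild. Unlike the disc case, where a single Weierstrass series does the job, here one needs to control all $n$ boundary components at once while staying holomorphic on $\OO$, and the natural tool — rational approximation on $\Delta_m$ with poles distributed among the holes $V_0,\dots,V_{n-1}$ — requires care that the approximants converge in the $A(\OO,J)$ topology (uniformly on every $\Delta_m$) and that the resulting boundary traces still satisfy the $Z_{L_i}$ conditions, i.e. that the approximation error is Lipschitz (not merely small) in a neighborhood of each $J_i$. Everything else — openness of the $E^{(i)}_{k,m}$, the identities $\tomi_m E^{(i)}_{k,m} = E^{(i)}_k$, and the passage from density of $S$ under $\mathrm{Re}$ to the full nowhere-differentiability statement via multiplication by $i$ and Baire — follows the template already established for the disc and is routine.
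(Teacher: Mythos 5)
Your proposal has two genuine gaps, and they sit exactly at the points you flag as delicate. First, polynomials are \emph{not} dense in $A(\OO,J)$ when $\OO$ is multiply connected ($n\geq 2$): for an annulus, $1/z$ cannot be approximated uniformly on a compact circle surrounding the hole by polynomials (its Laurent coefficient obstructs this), and Runge's pole-pushing only moves poles within a single complementary component of $\Delta_m$, so poles lying in a bounded hole $V_i$ cannot be ``pushed to $\infty$''. The correct dense class is the rational functions with poles off $\OO\cup J$, one pole allowed in each hole (this is Remark \ref{6.8.}(2)), so your translation scheme $\{f_0+p:\ p \text{ polynomial}\}$ cannot be dense. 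Second, your Lipschitz claims are made in the wrong variable. The sets $E^{(i)}_{k,m}$ and $Z_{L_i}$ concern difference quotients with respect to the parameter $\theta$ on $L_i\subset\bd D$, and the boundary extension of $\phi_i$ for an arbitrary Jordan curve (no rectifiability or smoothness is assumed here) need not be differentiable, or even Lipschitz, in $\theta$. Hence $(p\circ\phi_i)|_{L_i}$ is \emph{not} ``Lipschitz, indeed real-analytic'' in $\theta$, and likewise the cross terms $h_j\circ\phi_i$ ($j\neq i$) appearing in your assembled $f_0=\sum_i h_i$ may themselves have unbounded difference quotients in $\theta$; the triangle-inequality argument then gives nothing, so neither the membership $f_0\in S$ nor the stability of $S$ under your translations is established. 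The simultaneous-realization step, which you yourself identify as the main obstacle, is thus left genuinely unresolved (and a sup-norm small approximation error does not help, since smallness in $\norm{\cdot}_\infty$ does not control difference quotients).

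The paper takes a route that avoids both problems: it never constructs a single universal $f_0$. Given $f\in A(\OO,J)$, it uses the Laurent decomposition $f=f_0+f_1+\dots+f_{n-1}$ with $f_i\in A(\OO_i,J_i)$, fixes one index $i$ at a time, approximates $f_i$ by functions $g_m$ with $(g_m\circ\phi_i)|_{L_i}\in Z_{L_i}$ via Proposition \ref{6.5.} (the Jordan-domain case), and approximates the remaining sum $g$ \emph{in the disc coordinate}: one approximates $g\circ\phi_i$ uniformly on $\phi_i^{-1}(\Delta_m)$ by rational functions $r_m$ whose poles are slid off $\phi_i^{-1}(\OO\cup J)$, so that the perturbation, once composed with $\phi_i$, is just $r_m(e^{i\theta})$, genuinely differentiable in $\theta$; this is precisely what makes the triangle-inequality step legitimate. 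This shows each $S^{(i)}=\tomi_{k,m}E^{(i)}_{k,m}$ is dense, and since the $E^{(i)}_{k,m}$ are open, each $S^{(i)}$ is $G_\delta$ and dense; Baire's theorem then gives density of $S=\tomi_i S^{(i)}$, with no need to control all boundary components by one function at once. If you want to salvage your outline, replace polynomials by rational functions of the disc variable transported by $\phi_i^{-1}$ (or by rational functions with poles in the holes, checking $\theta$-regularity through $\phi_i$), and replace the single $f_0$ by the per-component Baire argument.
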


\begin{proof}
We only prove that the set $S^{(0)}=\tomi_{k,m=1}^{\infty} E^{(0)}_{k,m}$ is dense in $A(\OO,J)$. Let $f \in A(\OO,J)$. By the Laurent decomposition \cite{CoNePa} there exist $f_i \in A(\OO_i,J_i)$, such that $f=f_0+f_1+...+f_{n-1}$. We set $g=f_1+...+f_{n-1}$, hence we have that $f=f_0+g$. The argument we present afterwards is a modification of the proof of \cite{MastrPan}.
\par Let $m \in \N$ and consider the set $\Delta_m$. Then, $\Delta_m \subset \OO_0 \cup J_0 \subset \overline{\OO_0}$ and $\ext \dif \Delta_m$ has $n$ connected components, each of whom contains a connected component of $\ext \dif (\OO \cup J)$. Specifically, it is true that for $i \in I$, $V_i\cup (\bd{\OO_i} \dif J_i) \subset \ext \dif {\Delta}_{m}^{(i)}$. The function $ \phi_{0}^{-1}: \overline{\OO_0} \lra \overline{D}$ is a homeomorphism, thus the set ${\phi}_{0}^{-1}(\Delta_m) \subset D \cup L_0 \subset \overline{D}$ is compact and $\ext \dif {\phi}_{0}^{-1}(\Delta_m)$ has $n$ connected components each of whom contains a connected component of $\ext \dif {\phi}_{0}^{-1}(\OO \cup J) $. Hence, there exists a rational function $r_m$ with poles off ${\phi}_{0}^{-1}(\Delta_m)$, such that $\norm{g \circ \phi_0 -r_m}_{{\phi}_{0}^{-1}(\Delta_m)} < \frac{1}{m}$. By Runge's pole sliding theorem we can assume that the poles of $r_m$ are contained in $\ext \dif {\phi_0}^{-1}(\OO \cup J)$, therefore we may assume that $r_m \circ {\phi}_{0}^{-1} \in A(\OO,J)$. By repeating this procedure for $m=1,2,...$ we obtain a sequence of rational functions $\{r_m\}_{m=1}^{\infty}$, such that $r_m \circ {\phi}_{0}^{-1} \in A(\OO,J)$ and converges to $g$ in the topology of $A(\OO,J)$. Consider the function $f_0 \in A(\OO_0,J_0)$. Then, according to Proposition \ref{6.5.}, there exists a sequence $\{g_m\}_{m=1}^{\infty}$ in $A(\OO_0,J_0)$, such that $g_m \circ \phi_0 \in Z_{L_0}$ for $m=1,2,...$, which converges to $f_0$. Therefore, the sequence $\{g_m+r_m \circ {{\phi_0}^{-1}}\}_{m=1}^{\infty}$ converges to $f$, in the topology of $A(\OO,J)$. For $y \neq \theta$ in the same connected component of $L_0$ we have that, $\displaystyle \left | \frac{Re(g_m+r_m \circ {\phi}_{0}^{-1})(\phi_0(y))-Re(g_m+r_m \circ {\phi}_{0}^{-1})(\phi_0(\theta))}{y-\theta} \right | \geq \left | \frac{Re(g_m)(\phi_0(y))-Re(g_m)(\phi_0(\theta))}{y-\theta} \right | - \left | \frac{Re(r_m)(y)-Re(r_m)(\theta)}{y-\theta} \right |$ by the triangle inequality. Since the poles of $r_m$ are off $\phi_0^{-1}(\OO \cup J)$, we have that $r_m$ is differentiable on $L_0$. Therefore, taking the $\limsup$ as $y \lra \theta^{+}$ yields that the sequence $\{g_m+r_m \circ {{\phi_0}^{-1}}\}_{m=1}^{\infty}$ is contained in $S^{(0)}=\tomi_{k,m=1}^{\infty} E^{(0)}_{k,m}$. Hence, Baire category theorem implies that $S^{(0)}$ is a $G_\delta$ and dense set in $A(\OO,J)$, because it is a countable intersection of open dense sets. 
\par In a similar way we prove that the sets $S^{(i)}$ are $G_\delta$ and dense in the complete space $A(\OO,J)$. Baire's theorem applied once more implies that the set $S=\tomi_{i=0}^{n-1} S^{(i)}$ is also dense and $G_\delta$ in $A(\OO,J)$. The proof is complete.
\end{proof}

\begin{remark}  \label{6.8.}
The arguments used in the previous proofs imply easily the following:
\item[1.] If $\OO$ is a Jordan domain and $J \subset \bd{\OO}$ is a relatively open set, then the polynomials are dense in the space $A(\OO,J)$. 
\item[2.] If $\OO$ is a bounded domain whose boundary consists of a finite number of disjoint Jordan curves and $J$ is a relatively open subset of $\bd{\OO}$, then the rational functions with poles off $\OO \cup J$ are dense in $A(\OO,J)$. In fact, we can fix $n$ poles, each one in one hole of $\ext \dif \overline{\OO}$. \\
In 1, we use Mergelyan's theorem \cite{Ru}. In 2 we need an extension of it, where $K^{\mathrm{c}}$ has a finite number of components \cite{Ru}.
\end{remark}

\section{Nowhere differentiability in spaces  $A^{p}({\Omega},J)$}

Let $\OO$ be a simply connected domain, $\OO \neq \Co$. It is well known that there exists a Riemann map $\phi: D \lra \OO$. Suppose also, that there exist a relatively open subset $L$ of the unit circle and a relatively open subset $J$ of $\bd{\OO}$, such that there is a homeomorphism $\widetilde{\phi}: D \cup L \lra \OO \cup J$ which extends $\phi$. 
\par We define the space $A^{p}(\OO,J)$ similarly to the space $A^{0}(\OO,J)=A(\OO,J)$. Specifically, a function $f$ belongs to $A^{p}(\OO,J)$ if $f$ is holomorphic on $\OO$ and for every $0 \leq l \leq p, \: l\in \N \cup \{0\}$ the derivative of order $l$ belongs to $A(\OO,J)$. The topology in this space is induced by the seminorms $\norm{f^{(l)}|_{\Delta_m}}_{\infty}, \: m=1,2,...$ where $\{\Delta_m\}_{m=1}^{\infty}$ is the usual sequence of compact subsets of $\OO \cup J$. We are interested in functions $f \in A^{p}(\OO,J)$ such that $Re(f^{(p)} \circ \widetilde{\phi})|_{L}, Im(f^{(p)} \circ \widetilde{\phi})|_{L}$ are nowhere differentiable.

\underline{We assume the following:}

\item[1.] The space $A^{0}(\OO,J)$ contains a $G_\delta$ dense subset of functions $f$, such that $Re(f \circ \widetilde{\phi})|_{L},\:  Im(f \circ \widetilde{\phi})|_{L}$ are nowhere differentiable.

\item[2.] For every $\zeta \in J$ there exist $r>0$ and $C=C_{\zeta}>0$, such that for every $z,w \in D(\zeta,r) \cap \OO$, there exists a rectifiable curve $\gamma_{z,w} \subset D(\zeta,r) \cap \OO$ joining $z$ and $w$, such that $\mathbf{length}(\gamma_{z,w}) \leq C|z-w|$ and $\overline{D(\zeta,r) \cap \OO} \subset \OO \cup J$.

\item[3.] For every compact set $K \subset \OO \cup J$, there exist a positive constant $M=M_K$ and a compact set $L \subset \OO \cup J$, such that for every $z,w \in K \cap \OO$ there exists a rectifiable curve $\gamma_{z,w} \subset L \cap \OO$ joining $z$ and $w$ with $\mathbf{length}(\gamma_{z,w}) \leq M_K$. See also, \cite{NestZad}, \cite{SmStVo}.

\begin{theorem}  \label{7.1.}
If the assumptions 1,2 and 3 hold for the simply connected domain $\OO$ then for every $p \in \{0,1,2,...\}$ there is a set $S_p \subset A^{(p)}(\OO,J)$, $G_\delta$ and dense in $A^{(p)}(\OO,J)$, such that for every $f \in S_p$ the functions $Re(f^{(p)} \circ \widetilde{\phi})|_{L}, Im(f^{(p)} \circ \widetilde{\phi})|_{L}$ are nowhere differentiable.
\end{theorem}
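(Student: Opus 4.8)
The plan is to argue by induction on $p$, using assumption 1 as the base case $p=0$ (the set $S_0$ is exactly the $G_\delta$ dense set furnished by that hypothesis). For the inductive step, I would like to transfer the nowhere differentiability result for $A^{p}(\OO,J)$ to $A^{p+1}(\OO,J)$ by means of the antiderivative map. Concretely, fix a base point $z_0 \in \OO$ and consider the linear operator $T \colon A^{p}(\OO,J) \to A^{p+1}(\OO,J)$ defined by $(Tg)(z)=\int_{z_0}^{z} g(\zeta)\,d\zeta$, the integral taken along any path in $\OO$. The point of assumption 3 is precisely to guarantee that $T$ is continuous: for a compact $K \subset \OO \cup J$, the value $(Tg)(z)$ for $z \in K$ can be estimated using a rectifiable curve $\gamma_{z_0,z} \subset L\cap\OO$ of length at most $M_K$, so $\norm{(Tg)|_{K}}_\infty \le |Tg(z_0)| + M_K \cdot \norm{g|_L}_\infty$; combined with the fact that $(Tg)^{(l)}=g^{(l-1)}$ for $1\le l \le p+1$, this shows $T$ is a continuous linear injection with the obvious left inverse (differentiation), which is also continuous. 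Hence $T$ is a linear homeomorphism onto its closed image $T(A^{p}(\OO,J)) = \{f \in A^{p+1}(\OO,J): f(z_0)=0\}$, a closed hyperplane.

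Next, let $S_p \subset A^{p}(\OO,J)$ be the $G_\delta$ dense set given by the inductive hypothesis, consisting of functions $g$ with $Re(g^{(p)}\circ\widetilde\phi)|_L$ and $Im(g^{(p)}\circ\widetilde\phi)|_L$ nowhere differentiable. Since $(Tg)^{(p+1)} = g^{(p)}$, the image $T(S_p)$ consists of functions $f$ in the hyperplane with $Re(f^{(p+1)}\circ\widetilde\phi)|_L$, $Im(f^{(p+1)}\circ\widetilde\phi)|_L$ nowhere differentiable, and $T(S_p)$ is $G_\delta$ and dense \emph{in that hyperplane} because $T$ is a homeomorphism. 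To spread this out over the full space $A^{p+1}(\OO,J)$, note that any $f \in A^{p+1}(\OO,J)$ differs from an element of the hyperplane by the constant $f(z_0)$, i.e. $A^{p+1}(\OO,J) = T(A^{p}(\OO,J)) \oplus \Co$ as a topological direct sum, and the property "$f^{(p+1)}$ has nowhere differentiable real and imaginary parts on $L$" is invariant under adding constants (indeed under adding any polynomial of degree $\le p$, which is what assumption 2, via the Lipschitz-type comparison of the parametrizations, lets us use to see the differentiability notion is robust). Therefore $S_{p+1} := T(S_p) + \Co$ is $G_\delta$ and dense in $A^{p+1}(\OO,J)$, completing the induction.

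The role of assumption 2 deserves a word: it is used to ensure that the two natural notions of differentiability on $J$ — with respect to the arc-length parameter of $J$ (position) and with respect to the parameter transported from $\bd D$ via $\widetilde\phi$ — are bi-Lipschitz equivalent locally, so that "nowhere differentiable" is unambiguous and stable under the operations above; more pointedly, it guarantees that $f^{(p)}$ extends continuously to $\OO\cup J$ with controlled local oscillation so that $Tg$ genuinely lands in $A^{p+1}(\OO,J)$ and its boundary derivatives behave as claimed. The main obstacle I anticipate is verifying cleanly that $T$ maps into $A^{p+1}(\OO,J)$ and is continuous there, i.e. checking that the antiderivative of a function continuous on $\OO\cup J$ is again continuous on $\OO\cup J$ with the seminorm estimates above — this is where assumptions 2 and 3 must be invoked carefully, since without a rectifiability/length bound on connecting paths the integral defining $Tg$ need not extend continuously to the boundary part $J$, nor be controlled by the seminorms of $g$. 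Once continuity of $T$ and of its inverse is established, the transfer of the Baire-category statement is formal.
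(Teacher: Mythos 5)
Your proposal is correct and follows essentially the same route as the paper: the paper likewise integrates from a fixed base point, uses assumption 2 to get Lipschitz continuity of the antiderivative near points of $J$ (hence membership in $A^{1}(\OO,J)$), uses assumption 3 (together with Lemma 2.10, enlarging $K$ to a compact $E=\overline{E\cap\OO}$) for the seminorm estimate giving continuity of the integration operator, and then transfers the generic set via the isomorphism $A^{0}(\OO,J)\times\Co\ni(f,w)\mapsto F+w\in A^{1}(\OO,J)$ (open mapping theorem), iterating in $p$. The only cosmetic differences are your hyperplane-plus-constants packaging in place of the product map, and your aside crediting assumption 2 with a bi-Lipschitz comparison of parametrizations, which is neither needed nor its actual role---its sole purpose is the boundary continuity of $Tg$, as you yourself note.
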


\begin{proof}
Let $f \in A^{0}(\OO,J)$ and consider the function $\displaystyle F(z)=\int_{\gamma_{a,z}} f(\zeta) d\zeta, \: z \in \OO$, where $a$ is a fixed point of $\OO$ and $\gamma_{a,z}$ is a rectifiable curve in $\OO$ joining $a$ and $z$. The function $F$ is well defined, because of the independence of the path of integration in the simply connected domain $\OO$. We will prove that there is a unique continuous extension of $F$, $\widetilde{F}$ on $\OO \cup J$. For that purpose consider a point $\zeta \in J$. By the second assumption there exists a radius $r>0$ and a constant $C=C_\zeta>0$, such that for every $z,w \in D(\zeta,r) \cap \OO$ there exists a rectifiable curve $\gamma_{z,w} \subset L \cap \OO$ joining $z$ and $w$ with $\mathbf{length}(\gamma_{z,w}) \leq M_K$. Hence, we have that if $z,w \in D(\zeta,r) \cap \OO$, then $|F(z)-F(w)|=\left | \int_{\gamma_{a,z}} f(\zeta) d\zeta, \: z \in \OO \right | \leq \norm{f|_{D(\zeta,r) \cap \OO}}_{\infty} \cdot C \cdot |z-w|$ and $\norm{f|_{D(\zeta,r) \cap \OO}}_{\infty}<+\infty$ because $\overline{D(\zeta,r) \cap \OO} \subset \OO \cup J$ and is a compact set. Therefore, $F$ is Lipschitz continuous in a neighborhood of $\zeta \in \OO$, hence it can be uniquely extended continuously in $\zeta$. Since, $\zeta$ was arbitrarily chosen, $F$ can be continuously extended on $\OO \cup J$. Hence, if $f \in A^{0}(\OO,J)$, we have that $F \in A^{1}(\OO,J)$.
\par We will now prove that the map $A^{0}(\OO,J) \ni f \lra F \in A^{0}(\OO,J)$ is continuous. Obviously, it is a linear map. Let $K$ be a compact set contained in $\OO \cup J$. By lemma 2.10. there exists a larger compact set $K \subset E \subset \OO \cup J$ such that $\overline{E\cap\OO}=E$. Without loss of generality, we can assume that the base point $a$ is contained in $K$. By the third assumption, there is a constant $M=M_{E}>0$ and a compact set $L=L_{E}\subset \OO \cup J$, such that any two points $z,w \in E\cap \OO$ can be joined with a rectifiable curve $\gamma_{z,w} \subset L \cap \OO$ with $\mathbf{length}(\gamma_{z,w}) \leq M_E$. Therefore, we have $\displaystyle \sup_{z\in K} |F(z)| \leq \sup_{z \in E} |F(z)|=\sup_{z \in E\cap \OO} |F(z)|=\sup_{z \in E\cap \OO} \left|\int_{\gamma_{a,z}} f(\zeta) d\zeta \right | \leq M_E \cdot \sup_{\zeta \in L}|f(\zeta)|$ and obviously $\displaystyle \sup_{z \in K} |F'(z)|=\sup_{z\in K}|f(z)|$. Thus, the map $A^{0}(\OO,J) \ni f \lra F \in A^{0}(\OO,J)$ is continuous.
\par Finally, we can prove the theorem. Consider the function $T: A^{0}(\OO,J) \times \Co \lra A^{1}(\OO,J)$ which maps a pair $(f,w)$ to the function $F+w$. One can easily see that $T$ is linear, bijective, and it follows from our last argument that $T$ is also continuous. The spaces $A^{0}(\OO,J) \times \Co, A^{1}(\OO,J)$ are Fr\'{e}chet spaces, hence the open mapping theorem \cite{Rud} suggests that $T$ is a linear isomorphism. Therefore, the image of the set $S_0 \times \Co$ is a $G_\delta$ dense set in $A^{1}(\OO,J)$ and consists of functions $g \in A^{1}(\OO,J)$, such that $Re(g' \circ \widetilde{\phi})|_{L}, Im(g' \circ \widetilde{\phi})|_{L}$ are nowhere differentiable. By using the same argument successively, we can prove the same result for $p>1$.
\end{proof}

\begin{corollary}  \label{7.2.}
If $\OO$ is a bounded convex domain and $J\subsetneq \bd{\OO}$ a relatively open subset of its boundary, the set of functions $f \in A^{p}(\OO,J)$, such that  $Re(f^{(p)} \circ \widetilde{\phi}), Im(f^{(p)} \circ \widetilde{\phi})$ are nowhere differentiable on $L=\phi^{-1}(J)$, where $\phi : \overline{D} \lra \overline{\OO}$ is a Riemann map, contains a $G_\delta$ and dense set.
\end{corollary}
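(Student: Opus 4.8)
The plan is to obtain Corollary~\ref{7.2.} as an immediate application of Theorem~\ref{7.1.}, by verifying that a bounded convex domain $\OO \subset \Co$ together with a relatively open $J \subsetneq \bd\OO$ meets the three hypotheses of that theorem. I would begin with the structural preliminaries that place us in the framework of Section~7: a bounded convex open subset of $\Co$ is a Jordan domain, is simply connected, and satisfies $\OO \neq \Co$; hence a Riemann map $\phi : D \lra \OO$ exists and, by the Osgood--Carath\'{e}odory theorem, extends to a homeomorphism $\overline{D} \lra \overline{\OO}$, whose restriction is the required homeomorphism $\widetilde{\phi} : D \cup L \lra \OO \cup J$, with $L = \phi^{-1}(J)$ relatively open in $\bd D$.

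For hypothesis~1, since $\OO$ is a Jordan domain, Proposition~\ref{6.5.} provides a $G_\delta$ dense subset of $A^{0}(\OO,J)$ consisting of functions $f$ with $(f\circ\phi)|_L \in Z_L$. By Definition~\ref{2.9.}, for such an $f$ and each $\theta \in L$ the quantities $|(\mathrm{Re}(f\circ\phi)(y)-\mathrm{Re}(f\circ\phi)(\theta))/(y-\theta)|$ and the analogous ones for the imaginary part have $\limsup$ equal to $+\infty$ as $y \lra \theta$; since $\widetilde{\phi}|_L = \phi|_L$, this says exactly that $\mathrm{Re}(f\circ\widetilde{\phi})|_L$ and $\mathrm{Im}(f\circ\widetilde{\phi})|_L$ are nowhere differentiable, as a finite derivative at a point of $L$ would keep the corresponding difference quotient bounded there. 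Thus hypothesis~1 holds.

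Hypothesis~2 is a consequence of convexity and relative openness. Given $\zeta \in J$, choose $r>0$ with $D(\zeta,2r)\cap\bd\OO \subset J$; then $\overline{D(\zeta,r)\cap\OO}\subset \overline{D(\zeta,r)}\cap\overline{\OO}\subset (D(\zeta,2r)\cap\OO)\cup(D(\zeta,2r)\cap\bd\OO)\subset\OO\cup J$, and $D(\zeta,r)\cap\OO$ is convex, so any two of its points $z,w$ are joined inside it by the segment $[z,w]$, of length $|z-w|$; hence $C_\zeta = 1$ works. For hypothesis~3, fix once and for all a point $a\in\OO$. Given a compact $K\subset\OO\cup J$, let $L_K=\{(1-t)a+tx : x\in K,\ t\in[0,1]\}$, a compact set as the continuous image of $K\times[0,1]$. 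Using the elementary convexity fact that the half-open segment $[a,x)$ lies in $\OO$ whenever $a\in\OO$ and $x\in\overline{\OO}$, one sees $[a,x]\subset\OO$ for $x\in K\cap\OO$ and $[a,x]\subset\OO\cup\{x\}\subset\OO\cup J$ for $x\in K\cap J$, so $L_K\subset\OO\cup J$. For $z,w\in K\cap\OO$ the concatenation $\gamma_{z,w}=[z,a]\cup[a,w]$ lies in $L_K\cap\OO$ and has length $|z-a|+|a-w|\le 2\sup_{x\in K}|x-a|=:M_K$; when the base point is chosen in $K$ (as in the proof of Theorem~\ref{7.1.}) the curve $\gamma_{a,z}$ is simply the segment $[a,z]$. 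With hypotheses 1--3 checked, Theorem~\ref{7.1.} yields for every $p\in\{0,1,2,\dots\}$ a $G_\delta$ dense set $S_p\subset A^{p}(\OO,J)$ with the asserted nowhere-differentiability property, which proves Corollary~\ref{7.2.}.

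The step that will require the most attention is hypothesis~3: the naive choice of letting $L_K$ be the closure of the convex hull of $K$ can meet $\bd\OO\dif J$ when $\bd\OO$ has flat pieces, so it is essential to route every path through the fixed interior point $a$ and to use $[a,x)\subset\OO$ to keep $L_K$ inside $\OO\cup J$. The remaining verifications are routine.
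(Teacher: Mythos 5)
Your proposal is correct and follows essentially the same route as the paper: hypothesis 1 via Proposition \ref{6.5.}, hypothesis 2 with $C_\zeta=1$ by convexity, and hypothesis 3 via the compact set $\bigcup_{x\in K}[a,x]$ with $M_K$ bounded by the diameter-type supremum. You merely spell out a couple of verifications the paper leaves implicit (that $\overline{D(\zeta,r)\cap\OO}\subset\OO\cup J$ for suitable $r$, and that the cone over $K$ stays inside $\OO\cup J$), which is fine.
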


\begin{proof}
Proposition \ref{6.5.} yields that the first assumption of Theorem \ref{7.1.} is true for $A^{0}(\OO,J)$. Moreover, the second assumption is also true, because by the convexity of $\OO$ we can set $C_\zeta=1$ for every $\zeta \in J$. Finally, let $K \subset \OO \cup J$ be a compact set. We set $L=\enosi_{z \in K} [a,z]$, where $a \in \OO$ is fixed. This set is also compact, because $K$ and $[0,1]$ are compact. If $z,w$ are points in $K$, then they can be joined with the rectifiable curve $[z,a] \cup [a,w]$. Hence, we can set $M_K$ to be $\sup_{z,w\in K} (|a-z|+|a-w|)$. The last supremum is finite because $K$ is compact. Since, the assumptions 1,2 and 3 are true, Theorem \ref{7.1.} yields the result.

\end{proof}

\noindent Now, consider the case where $\OO \neq \Co$ is an unbounded convex domain and $J\subsetneq \bd{\OO}$ a relatively open subset of its boundary. There exists a Riemann map $\phi: \overline{D} \lra \overline{\OO}\cup\{\infty\}$. Let $a$ be a point, such that $a \notin \overline{\OO}$. The M{\"o}bius transformation $z \longmapsto \mu(z)=\frac{1}{z-a}$ is an automorphism of the extended plane $\ext$. It  maps $\OO$ to a Jordan domain $V$ and $J$ to a relatively open subset of $\bd{V}, \:J'$, such that $0=\mu(\infty) \in \bd{V} \dif J'$. Furthermore, the function $\mu \circ \phi: \overline{D} \lra \overline{V}$ is a Riemann map. Consider the function $T: A^{0}(V,J') \lra A^{0}(\OO,J)$, which maps $f \in A^{0}(V,J')$ to $f \circ \mu$. Then, $T$ is linear, bijective and continuous. The continuity follows from : $\norm{(f \circ \mu)|_{\Delta_m}}_{\infty}=\norm{f|_{\mu(\Delta_m)}}_{\infty}$ because $\mu(\Delta_m)$ is a compact set. Hence, by the open mapping theorem, the map $T$ is a linear isomorphism between the Fr\'{e}chet spaces $A^{0}(V,J')$ and $A^{0}(\OO,J)$, therefore the image of the set $S_0 \subset A(V,J')$, whose existence is guaranteed by Proposition 6.4., is a dense and $G_\delta$ set. Moreover, $T(S_0)$ consists of functions $g=f \circ \mu, \: f \in A(V,J')$, such that $Re(f \circ \mu \circ \phi)|_{L}, \: Im(f \circ \mu \circ \phi)|_{L}$ are nowhere differentiable, namely $Re(g \circ \phi)|_{L}, Im(g \circ \phi)|_{L}$ are nowhere differentiable. Therefore, the first assumption of Theorem 7.1. is true. Using similar arguments as in Corollary \ref{7.2.}, we can prove that assumptions 2 and 3 are also valid in this case. In conclusion, we have the following corollary:

\begin{corollary}  \label{7.3.}
If $\OO$ is a \underline{unbounded} convex domain and $J\subsetneq \bd{\OO}$ a relatively open subset of its boundary, the set of functions $f \in A^{p}(\OO,J)$, such that  $Re(f^{(p)} \circ \widetilde{\phi}), Im(f^{(p)} \circ \widetilde{\phi})$ are nowhere differentiable on $L=\phi^{-1}(J)$, where $\phi : \overline{D} \lra \overline{\OO}$ is a Riemann map, contains a $G_\delta$ and dense set.
\end{corollary}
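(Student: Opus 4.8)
The plan is to reduce the unbounded case to the bounded Jordan-domain situation already handled in Corollary \ref{7.2.}, by conjugating with a Möbius transformation, and then to invoke Theorem \ref{7.1.} directly.

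First I would pick a point $a \notin \overline{\OO}$ (such a point exists since $\OO \neq \Co$ is convex, hence not dense in $\Co$) and set $\mu(z) = \frac{1}{z-a}$, an automorphism of $\ext$. Because $\OO$ is convex and omits $a$, the image $V = \mu(\OO)$ is a bounded Jordan domain; I would record that $J' = \mu(J)$ is a relatively open subset of $\bd{V}$, that $0 = \mu(\infty) \in \bd{V} \dif J'$, and that $\mu \circ \phi : \overline{D} \lra \overline{V}$ is a Riemann map, with $L = \phi^{-1}(J) = (\mu \circ \phi)^{-1}(J')$.

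Next I would show that $T : A^{0}(V,J') \lra A^{0}(\OO,J)$, $f \longmapsto f \circ \mu$, is a linear isomorphism of Fréchet spaces. Linearity and bijectivity are clear, the inverse being composition with $\mu^{-1}$. For continuity I would use that $\mu$ maps each compact set $\Delta_m \subset \OO \cup J$ onto a compact subset of $V \cup J'$, so that $\norm{(T^{-1}g)|_{\mu(\Delta_m)}}_{\infty} = \norm{g|_{\Delta_m}}_{\infty}$, together with the fact that the Fréchet topology of $A^{0}$ does not depend on the particular exhausting sequence of compact sets used to define the seminorms; the open mapping theorem then upgrades $T$ to a topological isomorphism. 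Applying Proposition \ref{6.5.} to the pair $(V,J')$ produces a $G_\delta$ dense set $S_0 \subset A^{0}(V,J')$ of functions $f$ with $(f \circ \mu \circ \phi)|_{L} \in Z_{L}$; transporting it through $T$ yields a $G_\delta$ dense subset of $A^{0}(\OO,J)$ consisting of functions $g$ with $(g \circ \phi)|_{L} \in Z_{L}$, i.e. with $Re(g\circ\phi)|_{L}$ and $Im(g\circ\phi)|_{L}$ nowhere differentiable. This is precisely assumption 1 of Theorem \ref{7.1.} for $\OO$.

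Then I would verify assumptions 2 and 3 for the unbounded convex $\OO$ exactly as in Corollary \ref{7.2.}: convexity lets one take $C_\zeta = 1$, since the straight segment joins any two points of $D(\zeta,r) \cap \OO$, while choosing $r$ small enough gives $\overline{D(\zeta,r) \cap \OO} \subset \OO \cup J$ because $J$ is relatively open and $\zeta \in J$; and for a compact $K \subset \OO \cup J$, fixing $a \in \OO$, the set $L = \enosi_{z \in K}[a,z]$ is compact, is contained in $\OO \cup J$ (by convexity $[a,z) \subset \OO$ for every $z \in \overline{\OO}$), any $z,w \in K \cap \OO$ are joined inside $L \cap \OO$ by the curve $[z,a] \cup [a,w]$, and one may take $M_K = \sup_{z,w \in K}(|a-z| + |a-w|) < \infty$. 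With assumptions 1, 2 and 3 in hand, Theorem \ref{7.1.} provides, for each $p \in \{0,1,2,\dots\}$, the desired $G_\delta$ dense set $S_p \subset A^{p}(\OO,J)$. The only delicate points are in the first two steps — confirming that $\mu$ genuinely yields a Jordan domain with $0$ lying in the complementary boundary arc, so that all the Jordan-domain machinery applies verbatim, and that the composition operator $T$ is bicontinuous, which hinges on the independence of the topology of $A^{0}$ from the exhausting sequence; the remainder is a routine transcription of the bounded case.
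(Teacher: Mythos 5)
Your proposal is correct and follows essentially the same route as the paper: the Möbius transformation $\mu(z)=\frac{1}{z-a}$ with $a\notin\overline{\OO}$ reduces to a bounded Jordan domain, the composition operator $T(f)=f\circ\mu$ is shown to be a Fr\'{e}chet isomorphism via the open mapping theorem, Proposition \ref{6.5.} transported by $T$ gives assumption 1 of Theorem \ref{7.1.}, and assumptions 2 and 3 are checked exactly as in Corollary \ref{7.2.}. Your extra care about the independence of the topology from the exhausting sequence of compacts is a welcome, if minor, refinement of the paper's sketch.
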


\begin{remark}
Using conditions analogous to assumptions 1,2,3 we can prove that for every convex domain $\OO$, the set of polynomials is dense in $A^{p}(\OO,J), \: p \in \{0,1,2,...\}\cup\{\infty\}$. What are possible generalizations of this fact?
\end{remark}

\section{Nowhere differentiability in $A({\Omega},J)$ with respect to the position}

Consider the space $A(\OO,J)$, where $\OO$ is a Jordan domain and $J \subsetneq \bd{\OO}$, a relatively open subset of its boundary. Let $\displaystyle S(\OO,J)=\{f \in A(\OO,J): \limsup_{z \ra z_0,z \in J} \left |\frac{f(z)-f(z_0)}{z-z_0} \right | =+\infty \text{ for every } z_0 \in J\}$. If the class $S(\OO,J)$ is non-empty, it contains functions that are not differentiable, with respect to the position, at any point $z\in J$. Here, nowhere differentiability with respect to the position means that for every point $z_0 \in J$, the limit of the quotient $\displaystyle \left |\frac{f(z)-f(z_0)}{z-z_0} \right |$ as $z \ra z_0, (z \in J \dif \{z_0\})$ does not exist in $\Co$. Using the fact that the polynomials are dense in the space $A(\OO,J)$, we will prove that either $S(\OO,J)$ is void or it is a $G_\delta$ and dense set. We note that if the parametrization induced by any Riemann map $\phi: \overline{D} \lra \overline{\OO}$ is smooth, with non-vanishing derivative in $J$, then Proposition 6.5. yields that $S(\OO,J)$ is non-empty and in fact, $G_\delta$-dense in $A(\OO,J)$. \\
For $m,n \in \N$ we consider the sets $\displaystyle E_{n,m}=\{f \in A(\OO,J):  \text{ for all } z_0 \in J_m \text{ there exists a point } z \in (J_m\dif \{z_0\}) \cap D(z_0,\frac{1}{n})  \text{ such that } \left | \frac{f(z)-f(z_0)}{z-z_0} \right | > n \}$, where $J_m=J \cap \Delta_m, \: m=1,2,...$ .

\begin{lemma}  \label{8.1.}
For every $m,n \in \N$ the set $E_{n,m}$ is open in $A(\OO,J)$.
\end{lemma}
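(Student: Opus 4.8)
}

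The plan is to show that the complement $A(\OO,J) \dif E_{n,m}$ is closed by a sequential argument, using the fact that convergence in $A(\OO,J)$ implies uniform convergence on each compact set $\Delta_m$, hence uniform convergence on $J_m = J \cap \Delta_m$. First I would unwind the definition: a function $f$ fails to be in $E_{n,m}$ precisely when there exists a ``bad'' point $z_0 \in J_m$ such that for \emph{every} $z \in (J_m \dif \{z_0\}) \cap D(z_0,\tfrac{1}{n})$ one has $\left| \frac{f(z)-f(z_0)}{z-z_0} \right| \leq n$. So suppose $\fk$ is a sequence in the complement converging to some $f$ in $A(\OO,J)$, and let $z_0^{(k)} \in J_m$ be a corresponding bad point for $f_k$. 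Since $J_m$ is compact, after passing to a subsequence we may assume $z_0^{(k)} \to z_0 \in J_m$; I will show this $z_0$ is a bad point for $f$.

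Next I would fix an arbitrary $z \in (J_m \dif \{z_0\}) \cap D(z_0,\tfrac{1}{n})$ and aim to prove $\left| \frac{f(z)-f(z_0)}{z-z_0} \right| \leq n$. The main technical point is that the bad point $z_0^{(k)}$ moves with $k$, so the inequality $\left| \frac{f_k(z)-f_k(z_0^{(k)})}{z-z_0^{(k)}} \right| \leq n$ is only guaranteed for $z$ in the $k$-dependent punctured disc $(J_m \dif \{z_0^{(k)}\}) \cap D(z_0^{(k)},\tfrac{1}{n})$. For large $k$, since $z \neq z_0$ and $|z - z_0| < \tfrac{1}{n}$, we have $z \neq z_0^{(k)}$ and $|z - z_0^{(k)}| < \tfrac{1}{n}$ as well, so $z$ lies in that disc and the inequality $|f_k(z) - f_k(z_0^{(k)})| \leq n|z - z_0^{(k)}|$ holds eventually. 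Now I pass to the limit: $f_k(z) \to f(z)$ and, using $|f_k(z_0^{(k)}) - f(z_0)| \leq |f_k(z_0^{(k)}) - f(z_0^{(k)})| + |f(z_0^{(k)}) - f(z_0)| \leq \norm{(f_k - f)|_{J_m}}_\infty + |f(z_0^{(k)}) - f(z_0)| \to 0$ by uniform convergence on $J_m$ together with continuity of $f$ on $J_m$, we get $f_k(z_0^{(k)}) \to f(z_0)$. Also $|z - z_0^{(k)}| \to |z - z_0|$. Taking limits in $|f_k(z) - f_k(z_0^{(k)})| \leq n|z - z_0^{(k)}|$ yields $|f(z) - f(z_0)| \leq n|z - z_0|$, as desired.

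Since $z$ was an arbitrary point of $(J_m \dif \{z_0\}) \cap D(z_0,\tfrac{1}{n})$, this shows $z_0$ is a bad point for $f$, so $f \notin E_{n,m}$; hence the complement is (sequentially, thus topologically, since $A(\OO,J)$ is metrizable) closed and $E_{n,m}$ is open. The one subtlety to handle with a line of care is the degenerate possibility that the punctured disc $(J_m \dif \{z_0\}) \cap D(z_0,\tfrac{1}{n})$ is empty, in which case $z_0$ is vacuously bad and there is nothing to prove; this is the only place the argument could look different from the standard template, and it causes no real obstacle. I expect the proof to be essentially routine — the only thing requiring genuine attention is the bookkeeping around the moving bad point $z_0^{(k)}$, which is why a direct appeal to ``the proof of Lemma \ref{6.2.}'' is slightly less immediate here and worth spelling out.
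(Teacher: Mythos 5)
Your proof is correct and follows essentially the same route the paper has in mind: it omits the proof of Lemma \ref{8.1.}, pointing to Lemma \ref{6.2.} and the argument of Eskenazis, which is exactly this Banach-type compactness argument — take a sequence in the complement, extract a convergent subsequence of bad points in the compact set $J_m$, and pass to the limit using uniform convergence on $\Delta_m \supset J_m$. Your handling of the moving bad point $z_0^{(k)}$ and of the vacuous case is accurate, so nothing is missing.
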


\begin{theorem}  \label{8.2.}
If the set $S(\OO,J)$ is non-empty, then it is $G_\delta$-dense in $A(\OO,J)$. Hence, the set of functions that are not differentiable at any point of $J$, with respect to the position, contains a $G_\delta$-dense set. 
\end{theorem}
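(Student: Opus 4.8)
The plan is to follow the now-standard Baire category scheme used repeatedly in this paper. First I would observe that the seminorms $\norm{f|_{\Delta_m}}_\infty$ make $A(\OO,J)$ a complete metric space, and that $S(\OO,J) = \tomi_{n=1}^{\infty} \tomi_{m=1}^{\infty} E_{n,m}$; this identity is routine from the definitions, noting that every point $z_0 \in J$ lies in $J_m$ for all large $m$ and that any sequence $z \ra z_0$ with $z \in J$ eventually lies in $J_m$ as well, so failure of the $\limsup$ condition at $z_0$ localizes to some $E_{n,m}$. By Lemma \ref{8.1.} each $E_{n,m}$ is open, so $S(\OO,J)$ is automatically $G_\delta$; the entire content of the theorem is therefore the density statement, under the hypothesis that $S(\OO,J) \neq \emptyset$.

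For density, I would fix a nonempty witness $f_0 \in S(\OO,J)$ and exploit two facts. The first is the translation stability: if $f_0 \in S(\OO,J)$ and $p$ is any polynomial, then $f_0 + p \in S(\OO,J)$, since $p$ is (complex, hence position-) differentiable everywhere on $\overline{\OO}$, so the difference quotient of $p$ stays bounded near each $z_0 \in J$ and the $\limsup = +\infty$ of the quotient of $f_0$ is unaffected by the triangle inequality. The second is that polynomials are dense in $A(\OO,J)$ — this is exactly Remark \ref{6.8.}(1), proved via Mergelyan's theorem together with Lemma \ref{2.8.} (which guarantees $\Delta_m$ has connected complement in $\ext$). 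Combining these, the affine family $\{f_0 + p : p \text{ polynomial}\}$ is contained in $S(\OO,J)$ and is dense in $A(\OO,J)$: given $g \in A(\OO,J)$ and a basic neighborhood determined by $\norm{\cdot|_{\Delta_m}}_\infty < \e$, choose a polynomial $p$ with $\norm{(g - f_0 - p)|_{\Delta_m}}_\infty < \e$, so that $f_0 + p$ lies in that neighborhood. Hence $S(\OO,J)$ contains a dense subset, and being also $G_\delta$ in the complete metric space $A(\OO,J)$, it is itself dense; the final sentence of the theorem follows because every $f \in S(\OO,J)$ is, by definition of $S(\OO,J)$, not differentiable with respect to the position at any point of $J$.

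I do not anticipate a serious obstacle; the one point requiring a little care is making precise that the decomposition $S(\OO,J) = \tomi_{n,m} E_{n,m}$ is correct, i.e.\ that the two-sided localization to the compact exhaustion $\{J_m\}$ of $J$ genuinely recovers the pointwise $\limsup$ condition over all of $J$. This is the same bookkeeping already carried out in Section 6 (the identities $\tomi_m E_{n,m} = E_n$ there), and here it is even slightly simpler since there is no separate real/imaginary splitting to track. The only genuine hypothesis being used — and the reason the statement is conditional — is $S(\OO,J) \neq \emptyset$; density of polynomials and translation-invariance do the rest. I would conclude by recording, as the paper already notes, that when the boundary parametrization induced by a Riemann map $\phi : \overline{D} \lra \overline{\OO}$ is smooth with non-vanishing derivative on $J$, Proposition \ref{6.5.} supplies such a witness $f_0$, so in that case $S(\OO,J)$ is unconditionally $G_\delta$ and dense.
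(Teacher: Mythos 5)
Your proposal coincides with the paper's own (omitted) proof: the authors simply refer back to Section 6, and your scheme --- the decomposition of $S(\OO,J)$ into the sets $E_{n,m}$, openness via Lemma \ref{8.1.}, the translation family $\{f_0+p:\ p \text{ polynomial}\}$ with a witness $f_0\in S(\OO,J)$, and density of polynomials via Mergelyan's theorem, Lemma \ref{2.8.} and Remark \ref{6.8.} --- is exactly the argument of Lemmata \ref{6.2.}--\ref{6.4.} transplanted to differentiability with respect to the position. One small caveat: your justification of the identity $S(\OO,J)=\bigcap_{n,m}E_{n,m}$ is not literally correct, since a sequence $z\ra z_0\in J_m$ with $z\in J$ need only lie eventually in $J_{m'}$ for $m'>m$, not in $J_m$ itself (the distance to $\bd\OO\dif J$ may drop below $\frac{1}{m}$); this touches only the inclusion $S(\OO,J)\subset\bigcap_{n,m}E_{n,m}$, i.e.\ the claim that $S(\OO,J)$ itself is $G_\delta$, while the reverse inclusion and the density argument --- all that the ``hence'' clause needs --- are unaffected. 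The gap is repaired by an index shift (let the witness point $z$ range over $J_{m+1}$ rather than $J_m$ in the definition of $E_{n,m}$, which does not disturb the openness proof), and the same imprecision is already present in the identities $\bigcap_{m}E_{n,m}=E_n$ asserted without proof in Section 6, so your argument matches the paper's level of rigor as well as its method.
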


\noindent The proofs of Lemma 8.1. and Theorem 8.2. are similar to the proofs we present in Lemmata \ref{6.2.}, \ref{6.3.} ,thus they are omitted. We note that Lemma 8.1. and Theorem 8.2. are analogous to results stated in \cite{KavvMakr}. 

\begin{remark}  \label{8.3.}
In a private communication, Christoforos Panagiotis proved that for every Jordan domain $\OO$, it holds that $S(\OO,\bd{\OO}) \neq \emptyset$; this implies obviously that $S(\OO,J) \neq \emptyset$. Combining that with the above, we conclude that $S(\OO,J)$ is $G_\delta$ and dense in $A(\OO,J)$. \\
\end{remark}

\noindent \textbf{Acknowledgments}: Some results of the present paper relate to
discussions held during a Research in pairs program at
Cirm-Lumini on May 2017.

\bigskip

\noindent \textsc{Dimitris \ Lygkonis}: Department of
Mathematics, National and Kapodistrian University of Athens, Panepistimiopolis 157-84,
Athens, Greece.

\smallskip

\noindent \textit{E-mail:} \texttt{dimitris.ligonis@gmail.com}
\bigskip

\noindent \textsc{Vassilis \ Nestoridis}: Department of
Mathematics, National and Kapodistrian University of Athens, Panepistimiopolis 157-84,
Athens, Greece.

\smallskip

\noindent \textit{E-mail:} \texttt{vnestor@math.uoa.gr}
\end{flushleft}

\end{document}